\def\@setcopyright{\@empty}
\newcommand{\E}{E_n(f)_{p,\alpha}}
\newcommand{\Epar}[2]{E_{#1}\left(#2\right)_{p,\alpha}}
\newcommand{\T}[2]{\tau_{#1}\left(#2\right)}
\newcommand{\hatT}[2]{\hat\tau_{#1}\left(#2\right)}
\newcommand{\sT}{T_{2;y}(f,x)}
\newcommand{\w}{\hat\omega(f,\delta)_{p,\alpha}}
\newcommand{\wpar}[1]{\hat\omega\left(#1\right)_{p,\alpha}}
\newcommand{\K}{K(f,\delta)_{p,\alpha}}
\newcommand{\Kpar}[1]{K\left(#1\right)_{p,\alpha}}
\newcommand{\norm}[1]{\left\|#1\right\|_{p,\alpha}}
\newcommand{\Si}[1]{(1-#1^2)}
\newcommand{\Co}[1]{\cos^4#1/2}
\newcommand{\Dx}{D_{x,2,2}}
\newcommand{\Du}{D_{u,0,4}}
\newcommand{\AD}{AD(p,\alpha)}
\newcommand{\Px}[1]{P_{#1}^{(2,2)}}
\newcommand{\Py}[1]{P_{#1}^{(0,4)}}
\newcommand{\fz}{\left(f(z)-\frac{c_1}{c_0}\right)}
\newcommand{\krn}[1]{%
  \left(
    \frac{\sin\frac{m#1}2}{\sin\frac{#1}2}
  \right)^{2q+4}}
\newcommand{\kap}{\kappa(\delta)}
\newcommand{\sincosv}{(\sin v/2)^{-1}(\cos v/2)^{-9}}
\newcommand{\sincosu}{\sin u/2(\cos u/2)^9}
\newcommand{\varsincosu}{%
  \left(\sin\frac u2\right)\left(\cos\frac u2\right)^5}
\newcommand{\Lp}{L_{p,\alpha}}
\newcommand{\Lmu}{L_{1,2}}
\newcommand{\gd}{g_{\delta}(x)}
\newcommand{\allp}{1\le p\le\infty}
\newcommand{\prn}[1]{\left(#1\right)}
\newcommand{\brc}[1]{\left\{#1\right\}}
\newtheorem{thm}{Theorem}[subsection]
\newtheorem{lmm}{Lemma}[subsection]
\newtheorem{cor}{Corollary}[subsection]
\newcounter{const}
\numberwithin{const}{thm}
\numberwithin{const}{lmm}
\numberwithin{const}{cor}
\newcommand{\Cn}[1][]{%
  \stepcounter{const}C_{\theconst}%
  \@ifnotempty{#1}{\newcounter{#1}\setcounter{#1}{\arabic{const}}}}
\newcommand{\lastC}{C_{\theconst}}
\newcommand{\prevC}[1][1]{%
	{\countdef\n=255
	 \n=\theconst
	 \advance\n by-#1
	 C_{\number\n}}}
\numberwithin{equation}{subsection}
\renewcommand{\theconst}{\arabic{const}}
\begin{document}

\title[Theorems for a generalised modulus of smoothness]
	{Direct and inverse theorems of approximation theory for
	a generalised modulus of smoothness}
\author{M.~K.\ Potapov}
\address{M.~K.\ Potapov\\
	Department of Mechanics and Mathematics\\
	Moscow State University\\
	Moscow 117234\\
	Russia}
\author{F.~M.\ Berisha}
\address{F.~M.\ Berisha\\
	Faculty of Mathematics and Sciences\\
	University of Prishtina\\
	N\"ena Terez\"e~5\\
	10000 Prishtin\"e\\
	Kosov0}
	\email{faton.berisha@uni-pr.edu}

\keywords{Generalised modulus of smoothness,
	asymmetric operator of generalised translation,
	Jackson theorem, converse theorem,
	best approximations by algebraic polynomials}
\subjclass{Primary 41A35, Secondary 41A50, 42A16.}
\date{}
\thanks{This work was done under the support
	of the Russian Foundation for Fundamental Scientific Research,
	Grant \#97-01-00010 and Grant \#96/97-15-96073.}

\begin{abstract}
	An asymmetric operator of generalised translation is introduced
	in this paper.
	Using this operator,
	we define a generalised modulus of smoothness
	and prove direct and inverse theorems of approximation theory
	for it.
\end{abstract}

\maketitle

\begin{center}
\textbf{Introduction}
\end{center}

In a number of papers
(see, e.g.,
\cite{butzer-s-w:c-80,pawelke:acta-72,potapov:vestnik-83,
	potapov-f:trudy-85%
}%
)
direct and inverse theorems of approximation theory
are proved for generalised moduli of smoothness
defined by means of symmetric operators of generalised translation.
It is of interest to obtain the same results
for a moduli of smoothness
defined by means of asymmetric operators of generalised translation.

In the present paper such an operator is introduced,
the generalised modulus of smoothness is defined by its means,
and direct and inverse theorems of approximation theory
are proved for that modulus.

\subsection{}

By~$L_p$ we denote the set of functions~$f$
such that in the case $1\le p<\infty$,
$f$~is measurable on the segment~$[-1,1]$
and
\begin{displaymath}
	\|f\|_p=\prn{\int_{-1}^1|f(x)|^p\,dx}^{1/p}<\infty;
\end{displaymath}
and in the case $p=\infty$,
the function~$f$ is continuous on the segment~$[-1,1]$,
and
\begin{displaymath}
	\|f\|_\infty=\max_{-1\le x\le1}|f(x)|.
\end{displaymath}

Denote by~$\Lp$ the set of functions~$f$
such that
$f(x)\*\Si{x}^\alpha\in L_p$,
and put
\begin{displaymath}
	\norm f=\|f(x)\Si{x}^\alpha\|_p.
\end{displaymath}

By~$\E$ we denote the best approximation of the function $f\in\Lp$
by algebraic polynomials of degree not greater than $n-1$,
in $\Lp$ metrics, i.e.,
\begin{displaymath}
	\E=\inf_{P_n\in\mathbb P_n}\norm{f-P_n},
\end{displaymath}
where~$\mathbb P_n$ is the set of algebraic polynomials
of degree not greater than $n-1$.

By~$D_{x,\nu,\mu}$ we denote the operator
\begin{displaymath}
	D_{x,\nu,\mu}
	=\Si x\frac{d^2}{dx^2}+(\mu-\nu-(\nu+\mu+2)x)\frac d{dx}.
\end{displaymath}
It is obvoious that
\begin{displaymath}
	D_{x,\nu,\mu}=(1-x)^{-\nu}(1+x)^{-\mu}
		\frac d{dx}(1-x)^{\nu+1}(1+x)^{\mu+1}\frac d{dx}.
\end{displaymath}

We say that $g(x)\in\AD$ if $g(x)\in\Lp$,
the derivative $g'(x)$ is absolutely continuous
on every segment $[a,b]\subset(-1,1)$,
and $\Dx g(x)\in\Lp$.

Let
\begin{displaymath}
	\K=\inf_{g\in\AD}\prn{\norm{f-g}+\delta^2\norm{\Dx g(x)}}
\end{displaymath}
denote the~$K$-functional of Peetre interpolating
between spaces~$\Lp$ and~$\AD$.

We define the operator of generalised translation~$\hatT t{f,x}$
by
\begin{multline*}
	\hatT t{f,x}=\frac1{\pi\Si x\Co t}\\
		\times\int_0^\pi\bigg(
			2\prn{\sqrt{1-x^2}\cos t+x\sin t\cos\varphi
				+\sqrt{1-x^2}(1-\cos t)\sin^2\varphi}^2\\
			-1+\prn{x\cos t-\sqrt{1-x^2}\sin t\cos\varphi}^2
		\bigg)
			f\prn{x\cos t-\sqrt{1-x^2}\sin t\cos\varphi}\,d\varphi.
\end{multline*}

By means of the operator of generalised translation,
for a function $f\in\Lp$,
we define the generalised modulus of smoothness as follows
\begin{displaymath}
	\w=\sup_{|t|\le\delta}\norm{\hatT t{f,x}-f(x)}.
\end{displaymath}

Put $y=\cos t$, $z=\cos\varphi$ in the operator $\T t{f,x}$,
we denote it by~$\T y{f,x}$ and rewrite it in the form
\begin{displaymath}
	\T y{f,x}=\frac4{\pi\Si x(1+y)^2}
		\int_{-1}^1B_y(x,z,R)f(R)\frac{dz}{\sqrt{1-z^2}},
\end{displaymath}
where

\begin{gather*}
	R=xy-z\sqrt{1-x^2}\sqrt{1-y^2},\\
	B_y(x,z,R)
		=2\prn{\sqrt{1-x^2}y+zx\sqrt{1-y^2}
		  +\sqrt{1-x^2}(1-y)\Si z}^2-\Si R.
\end{gather*}

By~$P_\nu^{(\alpha,\beta)}(x)$ $(\nu=0,1,\dotsc)$
we denote the Jacobi polynomials, i.e.,
the algebraic polynomials of degree~$\nu$,
orthogonal with the weight function
$(1-x)^{\alpha}(1+x)^{\beta}$
on the segment~$[-1,1]$,
and normed by the condition
\begin{displaymath}
	P_\nu^{(\alpha,\beta)}(1)=1 \quad(\nu=0,1,\dotsc).
\end{displaymath}

Denote by~$a_n(f)$ the Fourier--Jacobi coefficients
of a function~$f$,
integrable with the weight function $\Si{x}^2$
on the segment~$[-1,1]$,
with respect to the system of Jacobi polynomials
$\brc{\Px n(x)}_{n=0}^\infty$,
i.e., let
\begin{displaymath}
	a_n(f)=\int_{-1}^1f(x)\Px n(x)\Si{x}^2\,dx\quad(n=0,1,\dotsc).
\end{displaymath}

The following symmetric operator of generalised translation
will play an auxiliary role in the sequel:
\begin{displaymath}
	\sT=\frac8{3\pi}\int_{-1}^1\Si{z}^2f(R)\frac{dz}{\sqrt{1-z^2}},
\end{displaymath}
where
\begin{displaymath}
	R=xy-z\sqrt{1-x^2}\sqrt{1-y^2}.
\end{displaymath}

\subsection{}

\begin{lmm}\label{lm:properties-tau}
	The operator~$\T y{f,x}$ has the following properties:
	\begin{enumerate}
	\item\label{it:properties-tau-1}
		it is linear,
	\item\label{it:properties-tau-2}
		$\T1{f,x}=f(x)$,
	\item\label{it:properties-tau-3}
		$\T y{\Px\nu,x}=\Px\nu(x)\Py\nu(y)
			\quad(\nu=0,1,\dotsc)$,
	\item\label{it:properties-tau-4}
		$\T y{1,x}=1$,
	\item\label{it:properties-tau-6}
		$a_n(\T y{f,x})=a_n(f)\Py n(y)
			\qquad(n=0,1,\dotsc)$.
	\end{enumerate}
\end{lmm}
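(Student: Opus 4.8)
The plan is as follows. Properties~\ref{it:properties-tau-1} and~\ref{it:properties-tau-2} are immediate: linearity in~$f$ follows from linearity of the integral, while for~\ref{it:properties-tau-2} one sets $y=1$; then $R=x$ and $B_1(x,z,x)=\Si x$, so the integrand no longer depends on~$z$ and the factor~$\Si x$ cancels the one in the denominator, leaving
\begin{displaymath}
	\T1{f,x}=\frac1\pi\,f(x)\int_{-1}^1\frac{dz}{\sqrt{1-z^2}}=f(x).
\end{displaymath}
Property~\ref{it:properties-tau-4} is property~\ref{it:properties-tau-3} with $\nu=0$, since $\Px0\equiv\Py0\equiv1$. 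Thus everything reduces to~\ref{it:properties-tau-3} and~\ref{it:properties-tau-6}.

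For~\ref{it:properties-tau-3} I would proceed in three steps. \emph{Step~1.} For fixed~$y$ the function $\T y{\Px\nu,x}$ is an algebraic polynomial in~$x$, and likewise in~$y$, of degree at most~$\nu$: indeed $\Px\nu(R)$ is a polynomial of degree~$\nu$ in~$z$ and $B_y(x,z,R)$ one of degree~$4$ in~$z$, so expanding the product and integrating term by term against $(1-z^2)^{-1/2}\,dz$ kills all odd powers of~$z$; in the surviving coefficients the irrational factor $\sqrt{1-x^2}\sqrt{1-y^2}$ occurs only to even powers, so the integral is a genuine polynomial in $x$ and~$y$; it vanishes at $x=\pm1$, hence is divisible by~$\Si x$, and a degree count for the quotient gives the bound~$\nu$ (and similarly in~$y$). \emph{Step~2.} Prove the intertwining identity
\begin{equation}\label{eq:tau-commute}
	\Dx\T y{f,x}=D_{y,0,4}\,\T y{f,x}
\end{equation}
for sufficiently smooth~$f$; this is where the precise form of~$B_y$ is used, and~\eqref{eq:tau-commute} should emerge from an integration by parts in~$z$ combined with the factored forms of~$\Dx$ and~$D_{y,0,4}$. \emph{Step~3.} Apply~\eqref{eq:tau-commute} with $f=\Px\nu$ and write, using Step~1,
\begin{displaymath}
	\T y{\Px\nu,x}=\sum_{k=0}^\nu\sum_{l=0}^\nu c_{k,l}\,\Px k(x)\,\Py l(y).
\end{displaymath}
Since $\Dx\Px k=-k(k+5)\Px k$ and $D_{y,0,4}\Py l=-l(l+5)\Py l$, \eqref{eq:tau-commute} forces $c_{k,l}\bigl(k(k+5)-l(l+5)\bigr)=0$, hence $c_{k,l}=0$ for $k\ne l$, so that $\T y{\Px\nu,x}=\sum_{k=0}^\nu c_{k,k}\Px k(x)\Py k(y)$. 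Putting $y=1$ and using~\ref{it:properties-tau-2}, the normalisation $\Py k(1)=1$, and uniqueness of the expansion with respect to~$\brc{\Px k}$ gives $c_{k,k}=0$ for $k<\nu$ and $c_{\nu,\nu}=1$, which is~\ref{it:properties-tau-3}. (Alternatively one may first show that~$\Dx$, applied to $\T y{f,x}$, acts on the argument of~$f$; by Step~1 this gives $\T y{\Px\nu,x}=c_\nu(y)\Px\nu(x)$, whereupon~\eqref{eq:tau-commute} yields $D_{y,0,4}c_\nu=-\nu(\nu+5)c_\nu$, and $c_\nu=\Py\nu$ follows from $c_\nu(1)=1$ and the regularity of~$c_\nu$ at~$y=1$.)

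Property~\ref{it:properties-tau-6} follows from~\ref{it:properties-tau-1} and~\ref{it:properties-tau-3}: since $\brc{\Px n}$ is orthogonal with weight~$\Si x^2$, the coefficient $a_n(\Px k)$ vanishes unless $k=n$, whence
\begin{displaymath}
	a_n\bigl(\T y{\Px k,x}\bigr)=a_n\bigl(\Px k(x)\Py k(y)\bigr)
		=\Py k(y)\,a_n(\Px k)=\Py n(y)\,a_n(\Px k)
\end{displaymath}
for every~$k$; by linearity the identity extends to all algebraic polynomials, and then to the general case by a density argument, using the continuity of~$\T y\cdot$ and of~$a_n$. I expect Step~2 --- the verification that~$B_y$ is precisely the kernel carrying~$\Dx$ into~$D_{y,0,4}$ --- to be the main obstacle; the remaining steps are formal or routine. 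It may be useful here to bring in the auxiliary symmetric operator~$\sT$, for which the analogous product formula $T_{2;y}(\Px\nu,x)=\Px\nu(x)\Px\nu(y)$ is classical, and to reduce~\eqref{eq:tau-commute} to the corresponding (simpler) relation for~$\sT$ by expressing $\T y{f,x}$ through~$\sT$ and operators acting in~$y$.
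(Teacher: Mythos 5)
Your reductions of \ref{it:properties-tau-1}, \ref{it:properties-tau-2} and \ref{it:properties-tau-4} agree with the paper, and your Step~3 eigenfunction argument would indeed work \emph{if} Steps~1 and~2 were in place. But the proposal has a genuine gap exactly where you flag it: the intertwining identity $\Dx\T y{f,x}=D_{y,0,4}\T y{f,x}$ (your Step~2) is never proved, only declared as something that ``should emerge'' from an integration by parts in~$z$. That identity is essentially equivalent to the product formula \ref{it:properties-tau-3} itself (it is the statement that the kernel $B_y$ couples the eigenvalue $-\nu(\nu+5)$ of $\Dx$ on $\Px\nu$ with the same eigenvalue of $D_{y,0,4}$ on $\Py\nu$), so as written the hard content of the lemma has been moved, not established. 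The paper does not go this route at all: it obtains \ref{it:properties-tau-3} by recognising $\T y{\cdot,x}$ in the multiplication formula for Vilenkin's functions $P^l_{mn}(z)$ (generalised spherical functions built from Jacobi polynomials), specialised to $n=0$, $m=k=2$; that citation is precisely the nontrivial input your plan still owes. A smaller point in Step~1: divisibility of the $z$-integral by $(1+y)^2$ is a double zero at $y=-1$, so ``similarly in $y$'' requires checking the first $y$-derivative as well, which you do not address.

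For \ref{it:properties-tau-6} your route (polynomials via \ref{it:properties-tau-3} and orthogonality, then density) also differs from the paper and is incomplete as stated: the passage to general $f$ needs continuity of $f\mapsto a_n(\T y{f,\cdot})$ on the weighted space $\Lmu$, and no such bound is available --- Lemma~\ref{lm:bound-tau} covers only $1/2<\alpha\le1$ for $p=1$, not $\alpha=2$. The paper instead proves \ref{it:properties-tau-6} directly for every $f$ with $a_n(f)$ defined, by the explicit change of variables~\eqref{eq:x-z} in the double integral, which yields $a_n(\T y{f,x})=\int_{-1}^1 f(R)\Si R^2\T y{\Px n,R}\,dR$ and then \ref{it:properties-tau-3}; that substitution (the self-adjointness of $\T y$ with respect to the weight $\Si x^2$, later isolated as Lemma~\ref{lm:properties-tau-5}) is also the natural way to supply the continuity your density argument would need. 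So: the overall architecture is reasonable, but both the central identity in Step~2 and the extension step in \ref{it:properties-tau-6} must actually be carried out (or replaced by the paper's citation and substitution) before this counts as a proof.
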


\begin{proof}
	Properties~\ref{it:properties-tau-1} and~\ref{it:properties-tau-2}
	follow immediately from the definition
	of the operator~$\T y{f,x}$.
	
	In order to prove~\ref{it:properties-tau-3},
	we consider the functions
	\begin{multline*}
		P_{mn}^l(z)=\\
		P_\nu^{(\alpha,\beta)}(z)\binom{\nu+\alpha}\alpha2^{-m}i^{m-n}
			\sqrt{\frac{(l-m)!(l+m)!}{(l-n)!(l+n)!}}
			(1-z)^{(m-n)/2}(1+z)^{(m+n)/2},
	\end{multline*}
	where
	\begin{displaymath}
		l=\nu+\frac{\alpha+\beta}2,
		\quad
		m=\frac{\alpha+\beta}2,
		\quad
		n=\frac{\alpha-\beta}2.
	\end{displaymath}
	Putting $n=0$, $m=k=2$ in the formula of multiplication
	for functions~$P_{mn}^l$
	(see~\cite[p.~138]{vilenkin:spetsiyalnye},
	we obtain the required equalities.
	
	Property~\ref{it:properties-tau-4}
	is proved by means of $\Px0(x)$
	in~\ref{it:properties-tau-3}.
	
	We prove the equality in~\ref{it:properties-tau-6}.
	To this effect,
	consider
	\begin{multline*}
		I=a_n(\T y{f,x})=\int_{-1}^1\T y{f,x}\Px n(x)\Si{x}^2\,dx=\\
		=\frac4{\pi(1+y)^2}\int_{-1}^1\int_{-1}^1B_y(x,z,R)f(R)\Px n(x)\Si x
			\frac{dz\,dx}{\sqrt{1-z^2}},
	\end{multline*}
	where
	\begin{gather*}
		R=xy-z\sqrt{1-x^2}\sqrt{1-y^2},\\
		B_y(x,z,R)
			=2\prn{\sqrt{1-x^2}y+zx\sqrt{1-y^2}
			  +\sqrt{1-x^2}(1-y)\Si z}^2-\Si R.
	\end{gather*}
	Performing the change of variables
	\begin{gather}\label{eq:x-z}
		x=Ry+V\sqrt{1-R^2}\sqrt{1-y^2},\\
		z=-\frac{R\sqrt{1-y^2}-Vy\sqrt{1-R^2}}
			{\sqrt{1-\prn{Ry+V\sqrt{1-R^2}\sqrt{1-y^2}}^2}}\notag
	\end{gather}
	in the double integral,
	we obtain
	\begin{displaymath}
		I=\frac4{\pi(1+y)^2}
		  \int_{-1}^1\int_{-1}^1B_y(R,V,x)\Si Rf(R)\Px n(x)
			  \frac{dV\,dR}{\sqrt{1-V^2}}.
	\end{displaymath}
	Therefore,
	\begin{multline*}
		I=\int_{-1}^1f(R)\Si{R}^2\frac4{\pi\Si R(1+y)^2}
				\int_{-1}^1B_y(R,V,x)\Px n(x)\\
			\times\frac{dV}{\sqrt{1-V^2}}dR
				=\int_{-1}^1f(R)\Si{R}^2\T y{\Px n,R}\,dR.
	\end{multline*}
	Hence property~\ref{it:properties-tau-3} yields
	\begin{displaymath}
		I=\Py n(y)\int_{-1}^1f(R)\Px n(R)\Si{R}^2\,dR
		=a_n(f)\Py n(y).
	\end{displaymath}
	
	Lema~\ref{lm:properties-tau} is proved.
\end{proof}

\begin{lmm}\label{lm:inequality}
	Let the numbers~$p$ and~$\alpha$ be such that $\allp$;
	\begin{alignat*}2
		1/2 			   &<\alpha\le1
		  &\quad &\text{for $p=1$},\\
		1-\frac1{2p} &<\alpha<\frac32-\frac1{2p}
		  &\quad &\text{for $1<p<\infty$},\\
		1 				   &\le\alpha<3/2
		  &\quad &\text{for $p=\infty$}.
	\end{alignat*}
	\begin{displaymath}
		R=x\cos t-z\sqrt{1-x^2}\sin t.
	\end{displaymath}
	Then for every function~$f\in\Lp$,
	we have
	\begin{displaymath}
		\norm{\frac1{1-x^2}\int_{-1}^1\Si{R}|f(R)|\frac{dz}{\sqrt{1-z^2}}}
		\le C\norm{f},
	\end{displaymath}
	where the constant~$C$ does not depend on~$f$ and~$x$.
\end{lmm}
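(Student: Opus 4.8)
The plan is to reduce the weighted $L_{p,\alpha}$-norm estimate to a boundedness statement for an explicit integral operator and then to apply a Hardy-type / Schur-test argument after a suitable change of variables. Write $w(x)=(1-x^2)^\alpha$. We must bound
\begin{displaymath}
	\norm[\big]{\frac1{1-x^2}\int_{-1}^1\Si{R}|f(R)|\frac{dz}{\sqrt{1-z^2}}}
\end{displaymath}
in terms of $\norm{f}=\|f\,w\|_p$. Substituting $g=f\,w$ turns the claim into the uniform boundedness on $L_p[-1,1]$ of the operator
\begin{displaymath}
	(Tg)(x)=\frac1{1-x^2}\int_{-1}^1\frac{(1-R^2)}{(1-R^2)^\alpha}\,|g(R)|\,\frac{dz}{\sqrt{1-z^2}}\cdot(1-x^2)^\alpha,
\end{displaymath}
that is, $T$ has kernel $K(x,z)=(1-x^2)^{\alpha-1}(1-R(x,z))^{1-\alpha}(1-z^2)^{-1/2}$ with $R=x\cos t-z\sqrt{1-x^2}\sin t$, uniformly in the parameter $t$. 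The first step, therefore, is to record the elementary two-sided comparison
\begin{displaymath}
	1-R^2\asymp (1-x^2)+(1-z^2)\sin^2 t\qquad\text{(up to absolute constants)},
\end{displaymath}
which follows from expanding $1-R^2=1-\bigl(x\cos t-z\sqrt{1-x^2}\sin t\bigr)^2$ and using $|xz|\le1$; this is the only place the geometry of the translation enters.

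The second step is the change of variables $x\mapsto s$ with $x=\cos\theta$, or more directly passing to the half-line: set $x=1-u$, $R=1-r$ near the endpoint $x=1$ (and symmetrically near $x=-1$), so that $1-x^2\asymp u$, $1-R^2\asymp r$, and the measure $dz/\sqrt{1-z^2}$ becomes comparable to $dr$ over the admissible range $r\in[c(u+\sin^2 t\,),\,C]$ dictated by the comparison above. The operator then looks like a weighted averaging operator $u^{\alpha-1}\int r^{1-\alpha}|g|\,dr$ on a dyadic-type region, and one estimates it by splitting into the regions $r\lesssim u$ and $r\gtrsim u$ and applying Hardy's inequality on each piece. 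The range of $\alpha$ in the hypothesis is exactly what makes the relevant Hardy exponents admissible: for $p=1$ one needs the weight $r^{1-\alpha}$ integrable against $dr$ near $0$ and the dual weight controlled, giving $1/2<\alpha\le1$; for $1<p<\infty$ one needs $r^{1-\alpha}\in L^{p'}_{\mathrm{loc}}$ and $u^{\alpha-1}\in L^{p}_{\mathrm{loc}}$ near the endpoint simultaneously, which is the condition $1-\tfrac1{2p}<\alpha<\tfrac32-\tfrac1{2p}$; and $p=\infty$ is the limiting case $1\le\alpha<3/2$. One should carry out the $p=1$ and $p=\infty$ cases by direct integration (Fubini / sup-estimate) and the range $1<p<\infty$ by Minkowski's integral inequality or the Schur test with test functions that are powers of $1-x^2$.

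The main obstacle I expect is the bookkeeping at the endpoints: one must verify that the substitution $z\mapsto R$ is genuinely comparable to a linear change of measure uniformly in $t$, and that the two endpoint singularities $x=\pm1$ do not interact (they are separated for $t$ away from $0$, and for small $t$ the operator is close to the identity, so a separate easy bound handles $|t|$ small). A clean way to organise this is to fix $t$, split $[-1,1]$ in $x$ into a neighbourhood of $+1$, a neighbourhood of $-1$, and a compact middle part where all weights are bounded above and below and the estimate is trivial; on each endpoint neighbourhood reduce to the model Hardy inequality as above. Throughout, the constant $C$ produced depends only on $p$ and $\alpha$ (the Hardy constants) and on absolute constants from the comparison $1-R^2\asymp(1-x^2)+(1-z^2)\sin^2t$, hence not on $f$, $x$, or $t$, as required.
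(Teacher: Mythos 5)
The central claim on which your whole reduction rests is false. For $R=x\cos t-z\sqrt{1-x^2}\sin t$ one has the identity (it is the one the paper uses in Lemma 1.3)
\begin{displaymath}
	1-R^2=(1-z^2)\sin^2 t+\bigl(\sqrt{1-x^2}\cos t+zx\sin t\bigr)^2 ,
\end{displaymath}
and the extra term $z^2x^2\sin^2t+2xz\sqrt{1-x^2}\sin t\cos t$ ruins the proposed comparison $1-R^2\asymp(1-x^2)+(1-z^2)\sin^2t$ in both directions: for $x=z=1$, $t=\pi/2$ we get $R=0$, so $1-R^2=1$ while $(1-x^2)+(1-z^2)\sin^2t=0$; for $x=0$, $z=1$, $t=\pi/2$ we get $R=-1$, so $1-R^2=0$ while $(1-x^2)+(1-z^2)\sin^2t=1$. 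Since you state that this comparison is ``the only place the geometry of the translation enters,'' and the admissible range of $r$, the claim that the two endpoints do not interact, and the choice of Hardy exponents are all derived from it, the argument collapses at this step. What is actually available are the product-type bounds $(1-z^2)\sin^2t\le 1-R^2$ and $(1-x^2)(1-z^2)\le 1-R^2$ (the latter by the symmetry of $R$ in $x$ and $\cos t$), and these carry markedly different information near $x=\pm1$ than your additive comparison; any correct proof has to live with the fact that $1-R^2$ can be of order $1$ while $1-x^2$ is tiny, and vice versa.

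Two further points. After the (affine in $z$) substitution $z\mapsto R$, the measure $dz/\sqrt{1-z^2}$ becomes $dR\bigl((1-x^2)\sin^2t-(x\cos t-R)^2\bigr)^{-1/2}$, which has inverse-square-root singularities at the endpoints $R=x\cos t\mp\sqrt{1-x^2}\sin t$; it is not comparable to $dR$ on the range of $R$, so the model operator is not the clean $u^{\alpha-1}\int r^{1-\alpha}|g|\,dr$ you write, and the Hardy bookkeeping would have to absorb these singularities. Likewise, ``for small $t$ the operator is close to the identity, so a separate easy bound handles $|t|$ small'' is not an argument: one needs a quantitative averaging estimate uniform in small $t$, which is of the same nature as the bound being proved. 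Finally, note that the paper does not prove this lemma at all; it cites a more general result of Potapov and Berisha, so there is no shortcut in the text to fall back on --- a complete proof (whether via a corrected Schur/Hardy scheme or otherwise) would need the singular-integral and endpoint analysis carried out in that reference, and your sketch as it stands does not supply it.
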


Lemma~\ref{lm:inequality} is proved in a more generalised form
in~\cite{p-berisha:east-98}.

\begin{lmm}\label{lm:bound-tau}
	Let the numbers~$p$ and~$\alpha$ be such that $\allp$;
	\begin{alignat*}2
		1/2				&<\alpha\le1
		  &\quad &\text{for $p=1$},\\
		1-\frac1{2p}	&<\alpha<\frac32-\frac1{2p}
		  &\quad &\text{for $1<p<\infty$},\\
		1				&\le\alpha<3/2
		  &\quad &\text{for $p=\infty$}.
	\end{alignat*}
	If $f\in\Lp$,
	then
	\begin{displaymath}
		\norm{\hatT t{f,x}}\le\frac C{\Co t}\norm f,
	\end{displaymath}
	where constant~$C$ does not depend on~$f$ and~$t$.
\end{lmm}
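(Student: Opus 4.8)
The plan is to reduce the estimate to Lemma~\ref{lm:inequality}; everything hinges on a pointwise majorization of the kernel of $\T y{f,x}$. Put $y=\cos t$ and $R=xy-z\sqrt{1-x^2}\sqrt{1-y^2}$ as in the integral representation of $\T y{f,x}$. A routine expansion of $R^2$ produces the two ``Pythagorean'' identities
\begin{gather*}
	\Si R=\prn{\sqrt{1-x^2}\,y+xz\sqrt{1-y^2}}^2+(1-y^2)(1-z^2),\\
	\Si R=\prn{x\sqrt{1-y^2}+yz\sqrt{1-x^2}}^2+(1-x^2)(1-z^2);
\end{gather*}
in particular $\prn{\sqrt{1-x^2}\,y+xz\sqrt{1-y^2}}^2\le\Si R$ and $(1-x^2)(1-z^2)\le\Si R$.

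Write $A=\sqrt{1-x^2}\,y+xz\sqrt{1-y^2}$ and $B=\sqrt{1-x^2}\,(1-y)(1-z^2)$, so that $B_y(x,z,R)=2(A+B)^2-\Si R$. Then $(A+B)^2\le2A^2+2B^2$, where $A^2\le\Si R$ by the first identity, while $B^2=(1-x^2)(1-y)^2(1-z^2)^2\le4(1-x^2)(1-z^2)\le4\,\Si R$ by the second one (here $0\le1-y\le2$ and $0\le1-z^2\le1$). Hence $(A+B)^2\le10\,\Si R$, and so
\begin{displaymath}
	\left|B_y(x,z,R)\right|=\left|2(A+B)^2-\Si R\right|\le21\,\Si R
\end{displaymath}
uniformly in $x,z\in[-1,1]$ and $y\in[-1,1]$.

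Now assemble the pieces. Substituting $y=\cos t$ in
\begin{displaymath}
	\T y{f,x}=\frac4{\pi\Si x(1+y)^2}
		\int_{-1}^1B_y(x,z,R)f(R)\,\frac{dz}{\sqrt{1-z^2}},
\end{displaymath}
noting that $(1+\cos t)^2=4\Co t$, and applying the kernel bound give
\begin{displaymath}
	\left|\hatT t{f,x}\right|
		\le\frac{21}{\pi\Co t}\cdot\frac1{1-x^2}
			\int_{-1}^1\Si R\,|f(R)|\,\frac{dz}{\sqrt{1-z^2}}.
\end{displaymath}
Taking $\Lp$ norms on both sides and invoking Lemma~\ref{lm:inequality}---whose hypotheses on $p$ and $\alpha$ are precisely those assumed here and whose $R$ coincides with the present one---yields $\norm{\hatT t{f,x}}\le\frac C{\Co t}\norm f$, which is the assertion.

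The only computational part is the verification of the two identities for $\Si R$, a mechanical expansion. I expect the substantive step---the one I would isolate as a separate claim---to be the pointwise bound $\left|B_y(x,z,R)\right|\le C\,\Si R$: it is exactly what makes Lemma~\ref{lm:inequality} applicable, and it shows that the only apparent singularity of $\hatT t{f,x}$, occurring at $t=\pi$ (that is, $y=-1$), is carried entirely by the factor $1/\Co t$ coming from $\frac1{(1+\cos t)^2}=\frac1{4\Co t}$.
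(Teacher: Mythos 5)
Your proof is correct and follows essentially the same route as the paper: the pointwise majorization $|B_y(x,z,R)|\le C\Si R$ via the identity $\Si R=\prn{\sqrt{1-x^2}\,y+xz\sqrt{1-y^2}}^2+(1-y^2)\Si z$ together with its $x\leftrightarrow y$ symmetric form, followed by the generalised Minkowski-type reduction to Lemma~\ref{lm:inequality}. The paper obtains the constant $19$ instead of your $21$ by bounding $|A|+|B|$ directly rather than using $(A+B)^2\le2A^2+2B^2$, which is immaterial.
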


\begin{proof}
	Let
	\begin{displaymath}
		I=\norm{\hatT t{f,x}}
			=\frac1{\pi\Co t}\norm{\frac1{1-x^2}\int_{-1}^1B_{\cos t}(x,z,R)
				f(R)\frac{dz}{\sqrt{1-z^2}}},
	\end{displaymath}
	where
	\begin{gather*}
		R=x\cos t-z\sqrt{1-x^2}\sin t,\\
		B_y(x,z,R)
			=2\prn{\sqrt{1+x^2}y+zx\sqrt{1-y^2}
			  +\sqrt{1-x^2}(1-y)\Si z}^2-\Si R.
	\end{gather*}
	
	Since
	\begin{displaymath}
		R^2+\prn{\sqrt{1-x^2}y+zx\sqrt{1-y^2}}^2=1-\Si y\Si z,
	\end{displaymath}
	we have
	\begin{equation}\label{eq:partB}
		\left|\sqrt{1-x^2}y+zx\sqrt{1-y^2}\right|\le\sqrt{1-R^2}
	\end{equation}
	and
	\begin{displaymath}
		\Si y\Si z\le1-R^2.
	\end{displaymath}
	Since~$R$ is symmetric in~$x$ and~$y$,
	the last inequality yields
	\begin{displaymath}
		\Si x\Si z\le1-R^2.
	\end{displaymath}
	Applying this inequality and inequality~\eqref{eq:partB},
	we get
	\begin{displaymath}
		|B_y(x,z,R)|\le19\Si R.
	\end{displaymath}
	
	Applying Lemma~\ref{lm:inequality},
	we obtain
	\begin{displaymath}
		I\le\frac{\Cn}{\Co t}
			\norm{\frac1{1-x^2}\int_{-1}^1\Si{R}|f(R)|
			  \frac{dz}{\sqrt{1-z^2}}}
		\le\frac{\Cn}{\Co t}\norm f.
	\end{displaymath}
	
	Lemma~\ref{lm:bound-tau} is proved.
\end{proof}

\begin{lmm}\label{lm:properties-tau-5}
	If $g(x)\T y{f,x}\in\Lmu$ for each $y\in(-1,1)$, then
	\begin{displaymath}
		\int_{-1}^1f(x)\T y{g,x}\Si{x}^2\,dx
		=\int_{-1}^1g(x)\T y{f,x}\Si{x}^2\,dx.
	\end{displaymath}
\end{lmm}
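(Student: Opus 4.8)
The plan is to carry out, with a general function~$g$ in place of the Jacobi polynomial~$\Px n$, exactly the computation already used to prove item~\ref{it:properties-tau-6} of Lemma~\ref{lm:properties-tau}. Using the integral representation of~$\T y{f,x}$ and the fact that $g(x)\,\T y{f,x}\in\Lmu$ (which is what makes the integral below meaningful), I would first write
\begin{multline*}
	J:=\int_{-1}^1 g(x)\,\T y{f,x}\,\Si x^2\,dx\\
	=\frac4{\pi(1+y)^2}\int_{-1}^1\int_{-1}^1
		B_y(x,z,R)\,f(R)\,g(x)\,\Si x\,\frac{dz\,dx}{\sqrt{1-z^2}},
\end{multline*}
where $R=xy-z\sqrt{1-x^2}\sqrt{1-y^2}$. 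The pointwise estimate $|B_y(x,z,R)|\le19\,\Si R$ obtained in the proof of Lemma~\ref{lm:bound-tau}, together with the hypothesis, shows that this double integral is absolutely convergent, so Fubini's theorem may be applied freely.

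Next I would perform in the double integral the change of variables~\eqref{eq:x-z}, that is, $x=Ry+V\sqrt{1-R^2}\sqrt{1-y^2}$ together with the corresponding expression for~$z$ in terms of~$R$ and~$V$. As was verified in Lemma~\ref{lm:properties-tau}, this substitution is a one-to-one map of the square $(-1,1)^2$ onto itself which transforms $B_y(x,z,R)\,\Si x\,\frac{dz\,dx}{\sqrt{1-z^2}}$ into $B_y(R,V,x)\,\Si R\,\frac{dV\,dR}{\sqrt{1-V^2}}$; the only new ingredient, the factor $g(x)$, becomes $g\bigl(Ry+V\sqrt{1-R^2}\sqrt{1-y^2}\bigr)$, i.e.\ $g$ evaluated at the new variable. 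Hence
\begin{multline*}
	J=\frac4{\pi(1+y)^2}\int_{-1}^1\int_{-1}^1
		B_y(R,V,x)\,f(R)\,g(x)\,\Si R\,\frac{dV\,dR}{\sqrt{1-V^2}}\\
	=\int_{-1}^1 f(R)\,\Si R^2\,\T y{g,R}\,dR,
\end{multline*}
where in the last step I pulled $f(R)\,\Si R^2$ out of the inner integral and recognised the remaining factor $\frac4{\pi\,\Si R\,(1+y)^2}\int_{-1}^1 B_y(R,V,x)\,g(x)\,\frac{dV}{\sqrt{1-V^2}}$ as $\T y{g,R}$. Renaming $R$ by~$x$ yields the claimed identity.

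I expect the only delicate point to be the justification of the interchange of integrations and of the change of variables under the single integrability assumption stated in the lemma: one must check that $g(x)\,\T y{f,x}\in\Lmu$, in combination with $|B_y|\le19\,\Si R$ and the measure-preserving property of~\eqref{eq:x-z} (applied to $|f|$ and $|g|$), really does force $\int_{-1}^1\int_{-1}^1|B_y(x,z,R)|\,|f(R)|\,|g(x)|\,\Si x\,\frac{dz\,dx}{\sqrt{1-z^2}}<\infty$, which is what makes Fubini applicable and simultaneously shows $f(x)\,\T y{g,x}\in\Lmu$. The verification that~\eqref{eq:x-z} is a bijection of the square with the indicated weight transformation is routine and is identical to the one already invoked in the proof of Lemma~\ref{lm:properties-tau}.
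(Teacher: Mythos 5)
Your proposal is correct and follows essentially the same route as the paper: write one side of the identity as a double integral via the integral representation of the translation operator, perform the change of variables~\eqref{eq:x-z}, use the symmetry $B_y(x,z,R)\Si x\,\frac{dz\,dx}{\sqrt{1-z^2}}\to B_y(R,V,x)\Si R\,\frac{dV\,dR}{\sqrt{1-V^2}}$, and reassemble the result as the other side. The only cosmetic difference is that you start from $\int g(x)\T y{f,x}\Si x^2\,dx$ (the side covered by the integrability hypothesis, with the Fubini justification made explicit) while the paper transforms $\int f(x)\T y{g,x}\Si x^2\,dx$; the computation is otherwise identical.
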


\begin{proof}
	We have
	\begin{multline*}
		I=\int_{-1}^1f(x)\T y{g,x}\Si{x}^2\,dx\\
		=\frac4{\pi(1+y)^2}\int_{-1}^1\int_{-1}^1f(x)g(R)B_y(x,z,R)
			\Si x\frac{dz\,dx}{\sqrt{1-z^2}},
	\end{multline*}
	where
	\begin{gather*}
		R=x\cos t-z\sqrt{1-x^2}\sin t,\\
		B_y(x,z,R)
			=2\prn{\sqrt{1-x^2}y+zx\sqrt{1-y^2}
			  +\sqrt{1-x^2}(1-y)\Si z}^2-\Si R.
	\end{gather*}
	Performing the change of variables in this double integral
	by formulas~\eqref{eq:x-z},
	we obtain
	\begin{multline*}
		I=\frac4{\pi(1+y)^2}\int_{-1}^1\int_{-1}^1f(x)g(R)
			B_y(R,V,x)\Si R\frac{dV\,dR}{\sqrt{1-V^2}}\\
		=\int_{-1}^1g(R)\T y{f,R}\Si{R}^2\,dR.
	\end{multline*}
	
	Lemma~\ref{lm:properties-tau-5} is proved.
\end{proof}

\begin{lmm}\label{lm:Dtau}
	Assume that the derivative~$f'(x)$ is absolutely continuous
	on every segment $[a,b]\subset(-1,1)$
	and $\Dx f(x)\in\Lmu$.
	Then
	\begin{enumerate}
	\item\label{it:Dtau-1}
		for fixed~$y\in(-1,1)$,
		the derivative
		$\frac d{dx}\T y{f,x}$ is absolutely continuous
		on every segment $[c,d]\subset(-1,1)$,
	\item\label{it:Dtau-3}
		for almost every $x\in(-1,1)$
		and every $y\in(-1,1)$,
		the following equality holds true
		\begin{displaymath}
			\T y{\Dx f,x}=\Dx\T y{f,x}.
		\end{displaymath}
	\end{enumerate}
\end{lmm}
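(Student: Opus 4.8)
The plan is to establish the regularity statement~\ref{it:Dtau-1} first, so that $\Dx\T y{f,x}$ makes sense as an element of~$\Lmu$, and then to deduce the commutation~\ref{it:Dtau-3} by comparing Fourier--Jacobi coefficients of the two sides, the commutation for Jacobi polynomials being already built into Lemma~\ref{lm:properties-tau}. Throughout I shall use that $\Px n$ satisfies the Jacobi differential equation, which in the present notation is
\begin{displaymath}
	\Dx\Px n(x)=-n(n+5)\Px n(x)\qquad(n=0,1,\dotsc),
\end{displaymath}
and that, since $\Dx g=\Si x^{-2}\frac{d}{dx}\!\prn{\Si x^{3}g'(x)}$, the operator~$\Dx$ is formally self-adjoint with respect to the weight~$\Si x^{2}$, with Lagrange boundary term $\bigl[\Si x^{3}(g'h-gh')\bigr]_{-1}^{1}$.

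To prove~\ref{it:Dtau-1}, fix $y\in(-1,1)$ and a segment $[c,d]\subset(-1,1)$ and start from
\begin{displaymath}
	\T y{f,x}=\frac4{\pi\Si x(1+y)^2}\int_{-1}^1B_y(x,z,R)f(R)\,\frac{dz}{\sqrt{1-z^2}},\qquad R=xy-z\sqrt{1-x^2}\sqrt{1-y^2}.
\end{displaymath}
I would differentiate this twice under the integral sign. For $x\in[c,d]$ with $x\ne\pm y$ the variable $R$ stays in a fixed compact subinterval of $(-1,1)$, on which $f$ is continuously differentiable and $f''\in L_1$ (because $f'$ is absolutely continuous there); the derivatives $\partial_xR$, $\partial_x^2R$ and the $x$-derivatives of $B_y(x,z,R)$ are bounded on the relevant range, so dominated convergence justifies the differentiations and represents $\frac{d^{2}}{dx^{2}}\T y{f,x}$ by an absolutely convergent integral containing $f''(R)$. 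Hence $\frac{d}{dx}\T y{f,x}$ is absolutely continuous on every subsegment of $[c,d]$ avoiding $\pm y$, and the same representation gives $\Dx\T y{f,x}\in\Lmu$. The two remaining points $x=\pm y$, where the $R$-singularities of the kernel collide with the endpoints $\pm1$, have to be treated separately using the decay of $f$ and $f'$ near $\pm1$ imposed by $\Dx f\in\Lmu$ (and by the very requirement that $\T y{f,x}$ be defined); this local analysis is, I expect, the main obstacle of the proof.

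Granting~\ref{it:Dtau-1}, the commutation follows from a coefficient computation. Using the Jacobi equation and the self-adjointness of~$\Dx$ (the boundary term $\bigl[\Si x^{3}\prn{f'(x)\Px n(x)-f(x)\frac{d}{dx}\Px n(x)}\bigr]_{-1}^{1}$ vanishing thanks to the growth restrictions above), one gets $a_n(\Dx f)=-n(n+5)\,a_n(f)$ and, in the same way, $a_n\!\prn{\Dx\T y{f,x}}=-n(n+5)\,a_n\!\prn{\T y{f,x}}$ (the boundary term here vanishing by~\ref{it:Dtau-1} and the behaviour of $\T y{f,x}$ near $\pm1$). On the other hand, Lemma~\ref{lm:properties-tau}\,\ref{it:properties-tau-6} gives $a_n\!\prn{\T y{f,x}}=\Py n(y)\,a_n(f)$ and $a_n\!\prn{\T y{\Dx f,x}}=\Py n(y)\,a_n(\Dx f)$. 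Hence $a_n\!\prn{\T y{\Dx f,x}}=-n(n+5)\Py n(y)\,a_n(f)=a_n\!\prn{\Dx\T y{f,x}}$ for all $n$; since both functions lie in~$\Lmu$ and $\brc{\Px n}$ is complete there, they agree almost everywhere, which is~\ref{it:Dtau-3}.

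A variant that sidesteps the delicate endpoint step is to prove \ref{it:Dtau-1} and \ref{it:Dtau-3} first for $\phi\in C_c^{\infty}(-1,1)$ --- there differentiation under the integral sign is trivial because the support of $\phi$ is compact in $(-1,1)$, and $\Dx\T y{\phi,x}=\T y{\Dx\phi,x}$ extends from algebraic polynomials (where it is the Jacobi equation together with Lemma~\ref{lm:properties-tau}\,\ref{it:properties-tau-3}) by approximating $\phi$ in $C^{2}[-1,1]$ --- and then to transfer to a general $f$ by duality: by Lemma~\ref{lm:properties-tau-5} and the self-adjointness of~$\Dx$ one checks that $\int_{-1}^{1}\T y{f,x}\,\Dx\phi(x)\,\Si x^{2}\,dx=\int_{-1}^{1}\T y{\Dx f,x}\,\phi(x)\,\Si x^{2}\,dx$ for all such $\phi$, so $\T y{f,\cdot}$ solves $\Dx u=\T y{\Dx f,\cdot}$ in the distributional sense on $(-1,1)$; since the right-hand side is locally integrable and $\Dx$ is a non-degenerate second-order operator on compact subintervals, $\T y{f,\cdot}$ has a representative with locally absolutely continuous first derivative satisfying the equation almost everywhere. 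Either way, the crux is the behaviour of the translation operator where its kernel degenerates, i.e.\ near $x=\pm1$ (equivalently $x=\pm y$).
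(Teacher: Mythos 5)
Your main route for part~\ref{it:Dtau-3} breaks down at its central step. The identities $a_n(\Dx f)=-n(n+5)\,a_n(f)$ and $a_n\prn{\Dx\T y{f,x}}=-n(n+5)\,a_n\prn{\T y{f,x}}$ require the Lagrange boundary terms $\bigl[\Si x^3\prn{f'(x)\Px n(x)-f(x)\tfrac d{dx}\Px n(x)}\bigr]_{-1}^{1}$ to vanish, and the hypotheses of the lemma do not give this: $\Dx f\in\Lmu$ says only that $\tfrac d{dx}\bigl[\Si x^3f'(x)\bigr]\in L_1(-1,1)$, so $\Si x^3f'(x)$ is absolutely continuous with finite limits at $\pm1$, and these limits may be nonzero. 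For instance, take $f$ smooth on $(-1,1)$, vanishing near $-1$, with $f'(x)=\Si x^{-3}$ near $x=1$; then $\Dx f\equiv0$ near $1$, $f(x)\sim c(1-x)^{-2}$ still lies in $\Lmu$ (and $\T y{f,x}$ is defined for $x\ne\pm y$, since then $R$ stays in a compact subinterval of $(-1,1)$), yet the boundary term at $x=1$ equals $\Px n(1)=1$. So there are no ``growth restrictions above'' forcing the cancellation, the coefficient identity $a_n(\Dx f)=-n(n+5)a_n(f)$ is simply false for such $f$, and the direct comparison of Fourier--Jacobi coefficients cannot be carried out for a general $f$ satisfying the hypotheses. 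The same objection applies to the identity you claim for $a_n\prn{\Dx\T y{f,x}}$.

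Your fallback variant is the paper's strategy, but it omits the one device that makes the duality step work. The paper does not test against all of $C_c^\infty(-1,1)$: it takes smooth $g$ vanishing outside a segment $[c,d]\subset(-1,-y)\cup(-y,y)\cup(y,1)$, i.e.\ vanishing in neighbourhoods of both $\pm y$. Since $R\to\pm y$ as $x\to\pm1$, for such $g$ the translate $\T y{g,x}$ vanishes identically near $x=\pm1$; hence in the two integrations by parts (turning $\int\Dx\T y{f,x}\,g\,\Si x^2\,dx$ into $\int\T y{f,x}\,\Dx g\,\Si x^2\,dx$, and $\int\Dx f\,\T y{g,x}\,\Si x^2\,dx$ into $\int f\,\Dx\T y{g,x}\,\Si x^2\,dx$) every boundary term disappears with no endpoint asymptotics, and Lemma~\ref{lm:properties-tau-5} together with the commutation already proved for $g$ gives $\int\prn{\T y{\Dx f,x}-\Dx\T y{f,x}}g\,\Si x^2\,dx=0$. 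If instead you test with a general $\phi\in C_c^\infty(-1,1)$, then unwinding your ``one checks'' identity via Lemma~\ref{lm:properties-tau-5} forces you to move $\Dx$ between $f$ and $\T y{\phi,x}$, and this produces exactly the boundary terms at $x=\pm1$ (involving $\Si x^3 f'$ and $\T y{\phi,x}$, neither of which need vanish there, as the example above shows) that you were trying to sidestep. For the smooth test class itself the paper establishes the commutation by the same Fourier--Jacobi argument you propose --- legitimately, because there all boundary terms vanish by the support condition --- rather than by $C^2$-approximation by polynomials. Your treatment of part~\ref{it:Dtau-1} coincides with the paper's (differentiation under the integral sign); the delicacy at $x=\pm y$ that you honestly flag is passed over by the paper as well and is not the main defect of your proposal.
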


\begin{proof}
	In order to prove~\ref{it:Dtau-1},
	we consider the function
	\begin{displaymath}
		\varphi(x)=\frac{B_y(x,z,R)}{\Si x(1+y)^2\sqrt{1-z^2}}f(R),
	\end{displaymath}
	where $B_y(x,z,R)$ and~$R$
	have been defined in Lemma~\ref{lm:properties-tau-5}.
	It is obvious that the function $\varphi'(x)$ is continuous
	on every segment $[c,d]\subset(-1,1)$.
	Hence~\ref{it:Dtau-1} follows
	by applying Lebesgue's dominated convergence theorem.
	
	In order to prove~\ref{it:Dtau-3},
	first we prove the equality
	\begin{equation}\label{eq:Dxtau}
		\T y{\Dx f,x}=\Dx\T y{f,x}
	\end{equation}
	for infinitely differentiable functions~$f$
	which are equal to zero outside of some segment
	$[a,b]\subset(-1,-y)\cup(-y,y)\cup(y,1)$.
	
	From~\ref{it:Dtau-1} it follows that~$\Dx\T y{f,x}$ exists.
	
	Assume that the function~$f$ is infinitely differentiable
	and is equal to zero outside of some segment
	$[a,b]\subset(-1,-y)\cup(-y,y)\cup(y,1)$.
	Applying Lemmas~\ref{lm:properties-tau-5}
	and~\ref{lm:properties-tau},
	we obtain
	\begin{multline*}
		I=\int_{-1}^1\T y{\Dx f,x}\Px n(x)\Si{x}^2\,dx\\
		=\Py n(y)\int_{-1}^1\Dx f(x)\Px n(x)\Si{x}^2\,dx.
	\end{multline*}
	
	Integrating by parts twice
	and taking into account that $f(x)=0$
	and $f'(x)=0$
	outside of $[a,b]\subset(-1,1)$,
	we have
	\begin{displaymath}
		I=\Py n(y)\int_{-1}^1\Dx\Px n(x)f(x)\Si{x}^2\,dx.
	\end{displaymath}
	It is well known~\cite[p.~171]{erdelyi-m-o-t:transcendental}
	that
	\begin{displaymath}
		\Dx\Px n(x)=-n(n+5)\Px n(x).
	\end{displaymath}
	Therefore
	\begin{displaymath}
		I=-n(n+5)\Py n(y)\int_{-1}^1f(x)\Px n(x)\Si{x}^2\,dx.
	\end{displaymath}
	Applying Lemmas~\ref{lm:properties-tau}
	and~\ref{lm:properties-tau-5},
	integrating by parts twice,
	and considering that $\T y{f,x}=0$
	outside of some segment $[\gamma,\delta]\subset(-1,1)$,
	we obtain
	\begin{displaymath}
		I=\int_{-1}^1\Dx\T y{f,x}\Px n(x)\Si{x}^2\,dx.
	\end{displaymath}
	Thus for fixed~$y$,
	all the Fourier--Jacobi coefficients of the function
	\begin{displaymath}
		F(x)=\T y{\Dx f,x}-\Dx\T y{f,x}
	\end{displaymath}
	with respect to the system $\brc{\Px n(x)}_{n=0}^\infty$
	of polynomials are equal to zero.
	Hence it follows that $F(x)=0$
	almost everywhere on~$[-1,1]$.
	
	Thus,
	equality~\eqref{eq:Dxtau} has been proved
	for infinitely differentiable functions
	which are equal to zero outside of some segment
	$[a,b]\subset(-1,-y)\cup(-y,y)\cup(y,1)$.
	
	Now, let the function~$f(x)$ satisfy the conditions of the lemma.
	Let a function~$g(x)$ be infinitely differentiable
	and equal to zero outside of some segment
	$[c,d]\subset(-1,-y)\cup(-y,y)\cup(y,1)$.
	Integrating by parts twice and taking into account that
	\begin{gather*}
		g(x)\Si{x}^3\frac d{dx}\T y{f,x}\to0 \quad\text{and}
		\quad\T y{f,x}\Si{x}^3\frac d{dx}g(x)\to0\\
		\text{for} \quad x\to-1+0 \quad\text{and} \quad x\to1-0,
	\end{gather*}
	we obtain
	\begin{displaymath}
		J_1=\int_{-1}^1\Dx\T y{f,x}g(x)\Si{x}^2\,dx
		=\int_{-1}^1\Dx g(x)\T y{f,x}\Si{x}^2\,dx.
	\end{displaymath}
	Applying Lemma~\ref{lm:properties-tau},
	we get
	\begin{displaymath}
		J_1=\int_{-1}^1f(x)\T y{\Dx g,x}\Si{x}^2\,dx.
	\end{displaymath}
	On the other hand, let
	\begin{displaymath}
		J_2=\int_{-1}^1\T y{\Dx f,x}g(x)\Si{x}^2\,dx.
	\end{displaymath}
	Applying Lemma~\ref{lm:properties-tau-5}
	and then integrating by parts twice,
	we have
	\begin{displaymath}
		J_2=\int_{-1}^1\Dx\T y{g,x}f(x)\Si{x}^2\,dx.
	\end{displaymath}
	
	Therefore, we obtain
	\begin{displaymath}
		J_2-J_1
		=\int_{-1}^1\prn{\Dx\T y{g,x}-\T y{\Dx g,x}}f(x)\Si{x}^2\,dx.
	\end{displaymath}
	But for the function~$g(x)$
	we have already proved equality~\eqref{eq:Dxtau}
	for almost every $x\in(-1,1)$.
	Hence
	\begin{displaymath}
		J_2-J_1
		=\int_{-1}^1\prn{\T y{\Dx f,x}-\Dx\T y{f,x}}g(x)\Si{x}^2\,dx=0
	\end{displaymath}
	for every~$y$.
	Now,
	equality~\eqref{eq:Dxtau}
	follows from the fact that the segment
	$[c,d]\subset(-1,-y)\cup(-y,y)\cup(y,1)$
	and the function~$g(x)$ can be arbitrarily chosen.
	
	Lemma~\ref{lm:Dtau} is proved.
\end{proof}

\begin{lmm}\label{lm:tauuDx}
	Assume that the derivative~$f'(x)$ is absolutely continuous
	on every segment $[a,b]\subset(-1,1)$
	and $\Dx f(x)\in\Lmu$.
	Then for almost every $x\in(-1,1)$
	and every $y\in(-1,1)$
	\begin{equation}\label{eq:tauuDx}
		\T y{f,x}-f(x)=\int_1^y(1-v)^{-1}(1+v)^{-5}\int_1^v(1+u)^4
			\T u{\Dx f,x}\,du\,dv
	\end{equation}
	and
	\begin{multline}\label{eq:tauuDx0}
		\T y{f,x}-\T0{f,x}\\
		=-\int_0^y(1-v)^{-1}(1+v)^{-5}
		  \int_v^{-1}(1+u)^4\T u{\Dx f,x}\,du\,dv.
	\end{multline}
\end{lmm}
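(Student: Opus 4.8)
The plan is to establish~\eqref{eq:tauuDx} and~\eqref{eq:tauuDx0} by comparing the Fourier--Jacobi coefficients of the two sides, in the spirit of the concluding step of the proof of Lemma~\ref{lm:Dtau}. Fix $y\in(-1,1)$. Since $y$ is an interior point, in the iterated integral on the right-hand side of~\eqref{eq:tauuDx} the variables $u$ and $v$ run only over a compact subinterval of $(-1,1)$, where the $\Lmu$-norm of $\T u{\Dx f,x}$ is bounded (by the estimate underlying Lemma~\ref{lm:bound-tau}, together with $\Dx f\in\Lmu$); hence that right-hand side is a well-defined element of $\Lmu$, and so is the left-hand side. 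It is therefore enough to prove that both sides of~\eqref{eq:tauuDx} have the same Fourier--Jacobi coefficients $a_n$ with respect to $\brc{\Px n(x)}_{n=0}^\infty$: this system being complete in $\Lmu$, an element of $\Lmu$ with all coefficients zero vanishes almost everywhere on $[-1,1]$.

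For the left-hand side, parts~\ref{it:properties-tau-6} and~\ref{it:properties-tau-2} of Lemma~\ref{lm:properties-tau} give
\begin{displaymath}
	a_n\prn{\T y{f,x}-f(x)}=a_n(f)\prn{\Py n(y)-1}
	\qquad(n=0,1,\dotsc).
\end{displaymath}
For the right-hand side I would interchange $a_n$ with the iterated integral (a Fubini argument justified by the bound above) and then apply part~\ref{it:properties-tau-6} of Lemma~\ref{lm:properties-tau} in the form $a_n\prn{\T u{\Dx f,x}}=a_n(\Dx f)\Py n(u)$. Writing $\Dx$ in self-adjoint form, so that $\Dx f(x)\Si{x}^2=\frac d{dx}\bigl[(1-x^2)^3f'(x)\bigr]$, integrating by parts twice in $a_n(\Dx f)=\int_{-1}^1\Dx f(x)\Px n(x)\Si{x}^2\,dx$ (the boundary terms vanishing by the hypotheses on $f$, exactly as in the proof of Lemma~\ref{lm:Dtau}), and using $\Dx\Px n(x)=-n(n+5)\Px n(x)$, one obtains $a_n(\Dx f)=-n(n+5)a_n(f)$. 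Consequently the Fourier--Jacobi coefficient of the right-hand side of~\eqref{eq:tauuDx} equals
\begin{displaymath}
	-n(n+5)\,a_n(f)\int_1^y(1-v)^{-1}(1+v)^{-5}
		\int_1^v(1+u)^4\Py n(u)\,du\,dv.
\end{displaymath}

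Matching the two expressions reduces everything to the purely polynomial identity
\begin{displaymath}
	-n(n+5)\int_1^y(1-v)^{-1}(1+v)^{-5}
		\int_1^v(1+u)^4\Py n(u)\,du\,dv=\Py n(y)-1,
\end{displaymath}
trivial for $n=0$ and, for $n\ge1$, obtained by integrating the Jacobi differential equation $\Du\Py n=-n(n+5)\Py n$, i.e.
\begin{displaymath}
	\frac d{du}\bigl[(1-u)(1+u)^5(\Py n)'(u)\bigr]
		=-n(n+5)(1+u)^4\Py n(u),
\end{displaymath}
first from $1$ to $v$ (the boundary term at $1$ killed by the factor $1-u$), then, after dividing by $(1-v)(1+v)^5$, from $1$ to $y$, using $\Py n(1)=1$. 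This settles~\eqref{eq:tauuDx}. The argument for~\eqref{eq:tauuDx0} is identical, the only change being that the needed polynomial identity
\begin{displaymath}
	n(n+5)\int_0^y(1-v)^{-1}(1+v)^{-5}
		\int_v^{-1}(1+u)^4\Py n(u)\,du\,dv=\Py n(y)-\Py n(0)
\end{displaymath}
is obtained from the same self-adjoint equation by integrating from $-1$ to $v$ (this time the boundary term at $-1$ is killed by the factor $(1+u)^5$) and then from $0$ to $y$.

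The heart of the matter is not the algebra but two analytic points: the Fubini interchange of $a_n$ with the iterated integrals, and the vanishing of the boundary terms in the twofold integration by parts defining $a_n(\Dx f)$. Both are entirely analogous to steps already carried out in the proof of Lemma~\ref{lm:Dtau} (where expressions such as $g(x)\Si{x}^3\frac d{dx}\T y{f,x}$ are shown to tend to $0$ as $x\to\pm1$), and both rely on the standing hypotheses that $f'$ is absolutely continuous on every $[a,b]\subset(-1,1)$ and $\Dx f\in\Lmu$, together with Lemmas~\ref{lm:inequality} and~\ref{lm:bound-tau}.
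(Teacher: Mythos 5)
Your strategy --- computing Fourier--Jacobi coefficients of both sides and invoking completeness of $\brc{\Px n}$ --- is genuinely different from the paper's, which first verifies \eqref{eq:tauuDx} directly for infinitely differentiable functions vanishing outside a segment (using $\T u{\Dx f,x}=\Du\T u{f,x}$ and integrating in $u$ and $v$), and then extends to general $f$ by pairing against such test functions through Lemma~\ref{lm:properties-tau-5}. As written, though, your route has a genuine gap at its first analytic step. To form $a_n\prn{\T u{\Dx f,x}}$, to apply property~\ref{it:properties-tau-6} of Lemma~\ref{lm:properties-tau} to $\Dx f$, and to interchange $a_n$ with the $u,v$-integration, you need $\T u{\Dx f,\cdot}$ (and likewise $\T y{f,\cdot}$ on the left-hand side) to be integrable against the weight $\Si x^2$, and you justify this by ``the estimate underlying Lemma~\ref{lm:bound-tau}''. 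But Lemma~\ref{lm:bound-tau}, and Lemma~\ref{lm:inequality} behind it, are stated only for $1/2<\alpha\le1$ when $p=1$ and do not cover $\Lmu=L_{1,2}$; compactness of the range of $u$ is beside the point, since the obstruction sits at $x,R\to\pm1$, not at $u\to\pm1$. What the kernel bound $|B_u(x,z,R)|\le19\Si R$ together with the change of variables~\eqref{eq:x-z} actually yields is
\begin{displaymath}
	\int_{-1}^1\left|\T u{h,x}\right|\Si x^2\,dx
	\le C(u)\int_{-1}^1|h(R)|\Si R\,dR,
\end{displaymath}
i.e.\ a bound through the $L_{1,1}$-norm of $h=\Dx f$, and the hypothesis $\Dx f\in\Lmu$ does not give $\Dx f\in L_{1,1}$; indeed $\T u$ fails to be bounded on $L_{1,2}$ (test it on a bump concentrated near $R=1$; the estimate is sharp there, since $B_u(x,-1,R)=\Si R$). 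The paper's duality argument is built precisely to avoid this: $\T u{\Dx f,x}$ is only ever integrated against a smooth compactly supported $g$, and Lemma~\ref{lm:properties-tau-5} shifts the weight problem onto $\T u{g,x}$, which is harmless at the endpoints.

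A second weak point is the identity $a_n(\Dx f)=-n(n+5)a_n(f)$, which is not obtained ``exactly as in the proof of Lemma~\ref{lm:Dtau}'': there the boundary terms vanish because one factor is compactly supported in $(-1,1)$, whereas here the two integrations by parts produce the endpoint terms $\Si x^3f'(x)\Px n(x)$ and $\Si x^3f(x)\frac d{dx}\Px n(x)$, and the stated hypotheses only guarantee that $\Si x^3f'(x)$ has a finite, not necessarily zero, limit at $\pm1$. For instance, if $f'(x)=\Si x^{-3}$ then $\Dx f=0\in\Lmu$, yet $a_n(\Dx f)=0\ne-n(n+5)a_n(f)$; so this endpoint vanishing must be proved (this is where the real restriction on $f$ enters), or one must first treat smooth compactly supported functions and then extend by duality, as the paper does, rather than by global Fourier coefficients. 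The purely algebraic core of your argument --- the identity $-n(n+5)\int_1^y(1-v)^{-1}(1+v)^{-5}\int_1^v(1+u)^4\Py n(u)\,du\,dv=\Py n(y)-1$ and its analogue for~\eqref{eq:tauuDx0} --- is correct and is the Fourier-side counterpart of the paper's use of $\Du\T u{f,x}=\T u{\Dx f,x}$, but the analytic bridge from it to the lemma is missing as things stand.
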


\begin{proof}
	We prove equality~\eqref{eq:tauuDx}.
	If~$f$ is an infinitely differentiable function,
	equal to zero outside of some segment
	$[a,b]\subset(-1,-y)\cup(-y,y)\cup(y,1)$,
	then for almost every $x\in(-1,1)$
	and almost every $u\in(-1,1)$
	the following equality holds true
	\begin{displaymath}
		\T u{\Dx f,x}=\Du\T u{f,x}.
	\end{displaymath}
	
	Applying this equality and Lemma~\ref{lm:properties-tau},
	we obtain
	\begin{multline*}
		\int_1^y(1-v)^{-1}(1+v)^{-5}\int_1^v(1+u)^4\T u{\Dx f,x}\,du\,dv\\
		=\int_1^y(1-v)^{-1}(1+v)^{-5}\int_1^v(1+u)^4\Du\T u{f,x}\,du\,dv
			=\T y{f,x}-f(x).
	\end{multline*}
	
	Now let the function~$f(x)$ satisfy the conditions of the lemma
	and let~$g(x)$ be an infinitely differentiable function,
	equal to zero outside of some segment
	$[c,d]\subset(-1,-y)\cup(-y,y)\cup(y,1)$.
	Then by Lemma~\ref{lm:properties-tau-5},
	analogously to the proof of Lemma~\ref{lm:Dtau},
	while integrating by parts twice,
	it is easy to prove that
	\begin{multline*}
		J=\int_{-1}^1\int_1^y(1-v)^{-1}(1+v)^{-5}
			\int_1^v(1+u)^4\T u{\Dx f,x}g(x)\Si{x}^2\,du\,dv\,dx\\
		=\int_{-1}^1f(x)\Si{x}^2\int_1^y(1-v)^{-1}(1+v)^{-5}
			\int_1^v(1+u)^4\Dx\T u{g,x}\,du\,dv\,dx.
	\end{multline*}
	Making use of Lemma~\ref{lm:Dtau}
	and the fact that we have already proved equality~\eqref{eq:tauuDx}
	for almost every $x\in(-1,1)$
	in the case of any infinitely differentiable function $g(x)$,
	equal to zero outside of the segment
	$[c,d]\subset(-1,-y)\cup(-y,y)\cup(y,1)$,
	we obtain
	\begin{displaymath}
		J=\int_{-1}^1(\T y{g,x}-g(x))f(x)\Si{x}^2\,dx.
	\end{displaymath}
	Applying once more Lemma~\ref{lm:properties-tau-5},
	we get that
	\begin{displaymath}
		J=\int_{-1}^1(\T y{f,x}-f(x))g(x)\Si{x}^2\,dx.
	\end{displaymath}
	Hence equality~\eqref{eq:tauuDx} follows
	by taking into account the fact that the segment~$[c,d]$
	and the function~$g(x)$
	can be arbitrarily chosen.
	
	Equality~\eqref{eq:tauuDx0} is proved in an analogous way.
	
	Lemma~\ref{lm:tauuDx} is proved.
\end{proof}

\begin{cor}
	Assume that the derivative~$f'(x)$ is absolutely continuous
	on every segment $[a,b]\subset(-1,1)$
	and $\Dx f(x)\in\Lmu$.
	Then for almost every $x\in(-1,1)$
	and every $t\in(-\pi,\pi)$
	\begin{multline*}
		\hatT t{f,x}-f(x)\\
		=\int_0^t\sincosv\int_0^v\hatT u{\Dx f,x}\sincosu\,du\,dv
	\end{multline*}
	and
	\begin{multline*}
		\hatT t{f,x}-\hatT{\pi/2}{f,x}\\
		=-\int_{\pi/2}^t\sincosv
		  \int_v^\pi\hatT u{\Dx f,x}\sincosu\,du\,dv.
	\end{multline*}
\end{cor}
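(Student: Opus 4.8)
The plan is to deduce this corollary from Lemma~\ref{lm:tauuDx} by passing from the parameter~$y$ to the angular parameter~$t$ through $y=\cos t$. The one preliminary fact needed is the identity
\begin{displaymath}
	\T{\cos t}{h,x}=\hatT t{h,x},
\end{displaymath}
valid for every $h\in\Lp$ (in particular for $h=f$ and for $h=\Dx f$) and every real~$t$. For $t\in[0,\pi]$ it is read directly off the two definitions upon setting $z=\cos\varphi$, $\sqrt{1-z^{2}}=\sin\varphi$: then $\int_{-1}^{1}(\cdot)\,dz/\sqrt{1-z^{2}}=\int_{0}^{\pi}(\cdot)\,d\varphi$, the polynomial $B_{\cos t}(x,z,R)$ becomes the bracket in the integrand of $\hatT t{h,x}$, and, because $(1+\cos t)^{2}=4\cos^{4}(t/2)=4\,\Co t$, the prefactor $4(1+y)^{-2}$ of $\T y{h,x}$ becomes $\frac1{\Co t}$. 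For the remaining values of~$t$ the identity follows since $\hatT t{h,x}$ is even in~$t$ (substitute $\varphi\mapsto\pi-\varphi$ in its defining integral).

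Granting this, I would first carry out in \eqref{eq:tauuDx} the substitution $v=\cos v'$ in the outer integral and $u=\cos u'$ in the inner one, afterwards relabelling $v',u'$ back as $v,u$. By the half-angle identities $1-\cos v=2\sin^{2}(v/2)$, $1+\cos v=2\cos^{2}(v/2)$, $d(\cos v)=-2\sin(v/2)\cos(v/2)\,dv$ (and the analogous ones for~$u$) one obtains
\begin{displaymath}
	(1-v)^{-1}(1+v)^{-5}\,dv=-2^{-5}\,\sincosv\,dv,
	\qquad
	(1+u)^{4}\,du=-2^{5}\,\sincosu\,du,
\end{displaymath}
while the limits of integration transform by $1\mapsto0$ and $\cos t\mapsto t$ (the cosine being strictly monotone on $[0,\pi]$, resp.\ on $[-\pi,0]$), and $\T{\cos u}{\Dx f,x}=\hatT u{\Dx f,x}$ by the identity of the previous paragraph. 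The two minus signs cancel, and so do the factors $2^{-5}$ and $2^{5}$; hence \eqref{eq:tauuDx} turns precisely into the first displayed formula of the corollary, whose left-hand side is $\hatT t{f,x}-f(x)$. The second formula comes in exactly the same way from \eqref{eq:tauuDx0}: now the inner integral $\int_{v}^{-1}$ becomes $\int_{v}^{\pi}$ (as $-1=\cos\pi$) and the outer integral $\int_{0}^{y}$ becomes $\int_{\pi/2}^{t}$ (as $0=\cos(\pi/2)$, whence $\T0{f,x}=\hatT{\pi/2}{f,x}$), the leading minus sign of \eqref{eq:tauuDx0} being retained.

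Finally I would remark that the hypotheses imposed on~$f$ are literally those of Lemma~\ref{lm:tauuDx}, so nothing there has to be re-proved, and that the exceptional set of measure zero in~$x$ is unchanged because the substitution involves only~$t$. The map $t\mapsto\cos t$ sends $(-\pi,\pi)$ onto $(-1,1]$, so the only value of~$y$ not already covered by Lemma~\ref{lm:tauuDx} is $y=1$, i.e.\ $t=0$; there the first identity is trivial (both sides vanish, and $\T1{f,x}=f(x)$ by Lemma~\ref{lm:properties-tau}\,\eqref{it:properties-tau-2}) and the second follows by letting $y\to1-0$ in \eqref{eq:tauuDx0}. I do not expect a genuine obstacle here: the only matters calling for attention are keeping the orientation of the cosine substitution consistent and checking that the powers $2^{\pm5}$ and the signs cancel, which is precisely what is done above.
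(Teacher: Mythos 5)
Your proposal is correct and follows essentially the same route as the paper, whose proof is precisely the one-line observation that the two identities follow from \eqref{eq:tauuDx} and \eqref{eq:tauuDx0} by substituting $\cos u$ and $\cos v$ for $u$ and $v$ (with $y=\cos t$, so that $\T0{f,x}=\hatT{\pi/2}{f,x}$). You merely carry out explicitly the bookkeeping the paper leaves implicit — the identification $\T{\cos t}{h,x}=\hatT t{h,x}$, the cancellation of the factors $2^{\pm5}$ and of the signs, evenness in $t$, and the boundary value $t=0$ — all of which checks out.
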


The first equality follows immediately
from equality~\eqref{eq:tauuDx}
by substituting~$\cos u$ and~$\cos v$ for~$u$ and~$v$,
respectively.
In an analogous way,
the second equality follows from equality~\eqref{eq:tauuDx0}.

\begin{lmm}\label{lm:bernshtein-markov}
	Let~$P_n$ be an algebraic polynomial
	of degree not greater than $n-1$,
	$\allp$, $\rho\ge0$;
	\begin{alignat*}2
		\alpha &>-1/p &\quad &\text{for $1\le p<\infty$},\\
		\alpha &\ge0  &\quad &\text{for $p=\infty$}.
	\end{alignat*}
	Then the following inequalities hold true:
	\begin{gather*}
		\|P'_n\|_{p,\alpha+1/2}\le\Cn n\norm{P_n},\\
		\norm{P_n}\le\Cn n^{2\rho}\|P_n\|_{p,\alpha+\rho},
	\end{gather*}
	where the constants~$\prevC$ and~$\lastC$
	do not depend on~$n$.
\end{lmm}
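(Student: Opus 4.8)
The plan is to pass from algebraic to trigonometric polynomials via the substitution $x=\cos\theta$ and then to apply two known weighted polynomial inequalities; below $C,C_1,C_2$ denote positive constants, possibly depending on $p,\alpha,\rho$ but not on~$n$. If $P_n$ has degree $\le n-1$ and $g(\theta)=P_n(\cos\theta)$, then $g$ is a trigonometric polynomial of degree $\le n-1$, $g'(\theta)=-\sin\theta\,P_n'(\cos\theta)$, and carrying out the substitution in the integrals gives, for $1\le p<\infty$,
\begin{gather*}
	\norm{P_n}^p=\int_0^\pi|g(\theta)|^p(\sin\theta)^{2\alpha p+1}\,d\theta,\qquad
	\|P_n\|_{p,\alpha+\rho}^p=\int_0^\pi|g(\theta)|^p(\sin\theta)^{2(\alpha+\rho)p+1}\,d\theta,\\
	\|P_n'\|_{p,\alpha+1/2}^p=\int_0^\pi|g'(\theta)|^p(\sin\theta)^{2\alpha p+1}\,d\theta ,
\end{gather*}
with the usual modifications (maxima instead of integrals) when $p=\infty$. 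The hypothesis on~$\alpha$ is precisely the condition $2\alpha p+1>-1$, i.e. that $(\sin\theta)^{2\alpha p+1}$ be an integrable doubling weight on $[0,\pi]$.

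For the first inequality, after this reduction it is exactly the Bernstein inequality for trigonometric polynomials of degree $\le n-1$ in $L^p$ with the power weight $(\sin\theta)^{2\alpha p+1}$:
\begin{displaymath}
	\int_0^\pi|g'(\theta)|^p(\sin\theta)^{2\alpha p+1}\,d\theta
	\le C_1\,n^p\int_0^\pi|g(\theta)|^p(\sin\theta)^{2\alpha p+1}\,d\theta .
\end{displaymath}
This is a known weighted Bernstein inequality (for $p=\infty$ it is the classical Bernstein inequality with a Jacobi weight), valid exactly on the range of~$\alpha$ that makes the weight doubling; taking $p$-th roots and using the identities above gives $\|P_n'\|_{p,\alpha+1/2}\le C_1 n\norm{P_n}$.

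For the second inequality I would split $[-1,1]=A_n\cup B_n$ with $A_n=\{x\in[-1,1]:\Si x>1/n^2\}$ and $B_n$ the union of the two caps near $\pm1$. On $A_n$ we have $\Si x^{-\rho p}<n^{2\rho p}$ (here $\rho\ge0$ is used), so $\int_{A_n}|P_n(x)|^p\Si x^{\alpha p}\,dx\le n^{2\rho p}\|P_n\|_{p,\alpha+\rho}^p$. On the cap adjacent to $x=1$ (the one near $-1$ is symmetric) I pass to $\theta=\arccos x$; it corresponds to $\theta\in[0,c_1/n]$, where $\sin\theta\asymp\theta$, whence, using $2\alpha p+1>-1$,
\begin{displaymath}
	\int_{B_n\cap\{x>0\}}|P_n(x)|^p\Si x^{\alpha p}\,dx
	\le C\max_{0\le\theta\le c_1/n}|g(\theta)|^p\int_0^{c_1/n}\theta^{2\alpha p+1}\,d\theta
	\le C\,n^{-2\alpha p-2}\max_{0\le\theta\le c_1/n}|g(\theta)|^p .
\end{displaymath}
Then I would use the Nikolskii-type estimate
\begin{displaymath}
	\max_{0\le\theta\le c_1/n}|g(\theta)|^p\le C\,n\int_{c_1/n}^{c_2/n}|g(\theta)|^p\,d\theta
\end{displaymath}
for trigonometric polynomials of degree $\le n-1$ (with fixed $0<c_1<c_2$), and finally, since $\sin\theta\asymp1/n$ on $[c_1/n,c_2/n]$,
\begin{displaymath}
	\int_{c_1/n}^{c_2/n}|g(\theta)|^p\,d\theta
	\le C\,n^{2(\alpha+\rho)p+1}\int_{c_1/n}^{c_2/n}|g(\theta)|^p(\sin\theta)^{2(\alpha+\rho)p+1}\,d\theta
	\le C\,n^{2(\alpha+\rho)p+1}\|P_n\|_{p,\alpha+\rho}^p .
\end{displaymath}
The exponents of~$n$ now add up to $-2\alpha p-2+1+2(\alpha+\rho)p+1=2\rho p$, so $\int_{B_n}|P_n(x)|^p\Si x^{\alpha p}\,dx\le C\,n^{2\rho p}\|P_n\|_{p,\alpha+\rho}^p$; together with the estimate on $A_n$ and $p$-th roots this yields $\norm{P_n}\le C_2 n^{2\rho}\|P_n\|_{p,\alpha+\rho}$ (the case $p=\infty$ is analogous, with maxima).

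The main obstacle is the Nikolskii-type step on a \emph{shifted} interval --- bounding $\max_{[0,c_1/n]}|g|$ by the $L^p$-mean of $|g|$ over the adjacent interval $[c_1/n,c_2/n]$ of comparable length --- which is where one uses that a polynomial of degree~$n$ cannot concentrate on a scale finer than~$1/n$. Once this (standard) fact and the weighted trigonometric Bernstein inequality are available, the rest is the bookkeeping of exponents carried out above; alternatively, both inequalities are of Bernstein--Markov type with a Jacobi weight and may be quoted from the literature on weighted polynomial approximation.
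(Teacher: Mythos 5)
The paper does not prove Lemma~\ref{lm:bernshtein-markov} at all: it is taken ready-made from~\cite{potapov:vestnik-60a}, so there is no internal argument to compare yours with. Your reduction is the standard route to such Bernstein--Markov type inequalities and is entirely plausible as a reconstruction of the cited proof: the substitution $x=\cos\theta$ does turn both inequalities into statements about the trigonometric polynomial $g(\theta)=P_n(\cos\theta)$ with the weight $(\sin\theta)^{2\alpha p+1}$, the hypothesis $\alpha>-1/p$ (resp.\ $\alpha\ge0$ for $p=\infty$) enters exactly where you say it does, and the exponent bookkeeping in your cap estimate ($-2\alpha p-2+1+2(\alpha+\rho)p+1=2\rho p$) is correct, as is the trivial bound on the set where $1-x^2>1/n^2$.

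The one place where your argument is not yet a proof is the shifted Nikolskii step, and you are right to single it out: as stated, $\max_{0\le\theta\le c_1/n}|g(\theta)|^p\le Cn\int_{c_1/n}^{c_2/n}|g(\theta)|^p\,d\theta$ is not an off-the-shelf inequality, because the integral runs over an interval that does not contain the point where the maximum is attained, and because $c_1$ is not at your disposal --- it is fixed by the splitting $1-x^2=1/n^2$ --- so the naive absorption device (bound the maximum by the mean over $[0,c_2/n]$ and absorb the contribution of $[0,c_1/n]$) only works if $c_2$ is chosen large in terms of the constant in the local Nikolskii inequality. A cleaner way to secure it is a Remez--Videnskii type inequality: since the removed set $[0,c_1/n]$ has measure $O(1/n)$ and $\deg g\le n-1$, one has $\max_{[0,c_2/n]}|g|\le C\max_{[c_1/n,c_2/n]}|g|$, after which an honest local Nikolskii inequality on $[c_1/n,c_2/n]$ (an interval of length $\asymp1/n$ containing the point of maximum) gives what you need; the same Remez-type bound settles the $p=\infty$ case. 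Note also that the ``doubling weight'' justification of the weighted Bernstein step is a $p<\infty$ statement; for $p=\infty$ you must, as you indeed say, fall back on the classical sup-norm Bernstein inequality with the Jacobi weight $(\sin\theta)^{2\alpha}$, $\alpha\ge0$. With these two ingredients made precise (or simply cited), your sketch is a correct proof --- though in the end it is no more self-contained than the paper's own treatment, which quotes the lemma from Potapov's 1960 paper.
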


Lemma is proved in~\cite{potapov:vestnik-60a}.

\begin{lmm}\label{lm:S-Q}
	Let~$q$ and~$m$ be natural numbers
	and let $f\in\Lmu$.
	Then the function
	\begin{displaymath}
		Q(x)=\int_0^\pi T_{2;\cos t}(f,x)\krn t\sin^5t\,dt
	\end{displaymath}
	is an algebraic polynomial
	of degree not greater than $(q+2)\*(m-1)$.
\end{lmm}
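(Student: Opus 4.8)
The plan is to compute the Fourier--Jacobi coefficients $a_n(Q)$ of $Q$ with respect to the system $\brc{\Px n}_{n=0}^\infty$ and to show that $a_n(Q)=0$ for every $n>(q+2)(m-1)$. Since a function in $\Lmu$ all of whose coefficients $a_n$ with $n>(q+2)(m-1)$ vanish coincides almost everywhere with an algebraic polynomial of degree at most $(q+2)(m-1)$, this yields the assertion, provided $Q\in\Lmu$; and $Q\in\Lmu$ is immediate: estimating $|T_{2;\cos t}(f,x)|$ by $\frac8{3\pi}\int_{-1}^1\Si{z}^2|f(R)|\frac{dz}{\sqrt{1-z^2}}$, using the inequality $\Si{x}^2\Si{z}^2\le\Si{x}\,\Si{R}$ (a consequence of the symmetry of~$R$ in~$x$ and~$y$, as in the proof of Lemma~\ref{lm:bound-tau}), and applying the change of variables~\eqref{eq:x-z}, one gets a uniform bound $\|T_{2;\cos t}(f,x)\|_{1,2}\le C\|f\|_{1,2}$, whence $\|Q\|_{1,2}\le C\|f\|_{1,2}\int_0^\pi\krn t\sin^5t\,dt<\infty$.

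Two preparatory facts will be used. The first is elementary: since
\begin{displaymath}
	\krn t=\prn{\frac{\sin^2(mt/2)}{\sin^2(t/2)}}^{q+2}
		=\prn{\frac{1-\cos mt}{1-\cos t}}^{q+2},
\end{displaymath}
and $\cos mt$ is a polynomial of degree $m$ in $\cos t$ that equals $1$ at $\cos t=1$, the quotient $(1-\cos mt)/(1-\cos t)$ is a polynomial of degree $m-1$ in $\cos t$; hence $\krn t=R(\cos t)$ for an algebraic polynomial $R$ of degree $(q+2)(m-1)$. The second fact is the analogue, for the auxiliary operator $T_{2;\cos t}$, of property~\ref{it:properties-tau-6} of Lemma~\ref{lm:properties-tau}: for $f\in\Lmu$,
\begin{displaymath}
	\int_{-1}^1T_{2;\cos t}(f,x)\Px n(x)\Si{x}^2\,dx=a_n(f)\,\Px n(\cos t)
	\qquad(n=0,1,\dotsc).
\end{displaymath}
It is proved exactly as property~\ref{it:properties-tau-6}: the change of variables~\eqref{eq:x-z} shows that $T_{2;\cos t}$ is self-adjoint with respect to the weight $\Si{x}^2$ (the analogue of Lemma~\ref{lm:properties-tau-5}, simpler here because the kernel $\Si{z}^2/\sqrt{1-z^2}$ is symmetric in the two spatial variables), so the integral equals $\int_{-1}^1f(x)\,T_{2;\cos t}(\Px n,x)\Si{x}^2\,dx$, and then one uses the product formula $T_{2;\cos t}(\Px n,x)=\Px n(x)\,\Px n(\cos t)$, which is the classical product formula for ultraspherical polynomials and follows from the multiplication formula for the functions $P_{mn}^l$ as in Lemma~\ref{lm:properties-tau} (cf.\ \cite{vilenkin:spetsiyalnye}).

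Granting these, the computation is short. By the second fact and Fubini's theorem (legitimate because $\|T_{2;\cos t}(f,x)\|_{1,2}$ is bounded uniformly in $t$, $\Px n$ is bounded on $[-1,1]$, and $\krn t\sin^5t$ is bounded on $[0,\pi]$),
\begin{multline*}
	a_n(Q)=\int_0^\pi\krn t\sin^5t\,
		\prn{\int_{-1}^1T_{2;\cos t}(f,x)\Px n(x)\Si{x}^2\,dx}\,dt\\
	=a_n(f)\int_0^\pi R(\cos t)\,\Px n(\cos t)\sin^5t\,dt\\
	=a_n(f)\int_{-1}^1R(z)\,\Px n(z)\,\Si{z}^2\,dz,
\end{multline*}
the last equality being the substitution $z=\cos t$. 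As $R$ has degree $(q+2)(m-1)$, it is a linear combination of $\Px0,\dotsc,\Px{(q+2)(m-1)}$, so by the orthogonality of the Jacobi polynomials $\brc{\Px n}$ with weight $\Si{z}^2$ the last integral vanishes whenever $n>(q+2)(m-1)$. Hence $a_n(Q)=0$ for all such $n$, and since $Q\in\Lmu$, $Q$ coincides almost everywhere with an algebraic polynomial of degree not greater than $(q+2)(m-1)$.

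I expect the principal difficulty to lie in the second preparatory fact: one must carry the change of variables~\eqref{eq:x-z} through for $T_{2;\cos t}$ and verify that it yields self-adjointness with respect to $\Si{x}^2$, and one must have at hand the ultraspherical product formula $T_{2;\cos t}(\Px n,x)=\Px n(x)\Px n(\cos t)$; both are parallel to, but not literally contained in, Lemmas~\ref{lm:properties-tau} and~\ref{lm:properties-tau-5}. Everything after that is the orthogonality of Jacobi polynomials combined with the elementary remark that the kernel is a polynomial of degree $(q+2)(m-1)$ in $\cos t$. A secondary technical point is the uniform $\Lmu$-bound for $T_{2;\cos t}$, needed both for $Q\in\Lmu$ and for Fubini; it comes from the same symmetric change of variables as in Lemma~\ref{lm:bound-tau}.
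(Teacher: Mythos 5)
Your proof is correct, but note that the paper itself gives no proof of this lemma: it only cites Potapov's 1974 paper, so there is no argument in the text to compare yours against. All the ingredients you use check out: the uniform bound $\|T_{2;\cos t}(f,\cdot)\|_{1,2}\le C\|f\|_{1,2}$ via $\Si x^2\Si z^2\le\Si x\Si R$ and the change of variables~\eqref{eq:x-z}; the identity $\krn t=\prn{\frac{1-\cos mt}{1-\cos t}}^{q+2}=R(\cos t)$ with $\deg R=(q+2)(m-1)$; and the two facts about $T_{2;y}$ (self-adjointness with respect to the weight $\Si x^2$ and the product formula $T_{2;y}(\Px n,x)=\Px n(x)\Px n(y)$), which are exactly the $\lambda=5/2$ Gegenbauer translation facts — the normalisation $\frac8{3\pi}$ is precisely the one making $T_{2;y}(1,x)=1$, so the product formula holds as you state it. Two small remarks. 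First, your route through vanishing Fourier--Jacobi coefficients plus completeness of $\brc{\Px n}$ in $\Lmu$ only yields that $Q$ coincides almost everywhere with a polynomial; this suffices for the application in Lemma~\ref{lm:E-D}, and the paper itself uses the same completeness argument in Lemma~\ref{lm:Dtau}, but you can get the stronger, pointwise statement more directly and more cheaply: expand $R(y)=\sum_{\nu=0}^{N}b_\nu\Px\nu(y)$ with $N=(q+2)(m-1)$, substitute $y=\cos t$ in the definition of $Q$, and apply self-adjointness together with the product formula inside the $y$-integral to obtain $Q(x)=\sum_{\nu=0}^{N}b_\nu a_\nu(f)\Px\nu(x)$ explicitly, with no appeal to completeness (this is presumably the argument of the cited reference). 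Second, your ``second preparatory fact'' is indeed the crux and does require carrying the change of variables~\eqref{eq:x-z} through for the symmetric kernel $\Si z^2/\sqrt{1-z^2}$; you flag this honestly, and the computation does work out, since $\Si x^2\Si z^{3/2}\,dz\,dx$ transforms symmetrically in the pair $(x,R)$.
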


Lemma is proved in~\cite{potapov:trudy-74}.

\begin{lmm}\label{lm:E-D}
	Let the numbers~$p$ and~$\alpha$ be such that $\allp$;
	\begin{alignat*}2
		-1/2 			  &<\alpha\le2
		  &\quad &\text{for $p=1$},\\
		-\frac1{2p} &<\alpha<5/2-\frac1{2p}
		  &\quad &\text{for $1<p<\infty$},\\
		0 				  &\le\alpha<5/2
		  &\quad &\text{for $p=\infty$}.
	\end{alignat*}
	If $f\in\AD$,
	then
	\begin{displaymath}
		\E\le C\frac1{n^2}\norm{\Dx f(x)},
	\end{displaymath}
	where the constant~$C$ does not depend on~$f$ and~$n$.
\end{lmm}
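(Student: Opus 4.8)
The plan is to construct near-best polynomial approximations to $f$ from the symmetric translation $T_{2;\cos t}$ by means of the Jackson-type kernel of Lemma~\ref{lm:S-Q}, and to control the error through the second-order integral representation of $f(x)-T_{2;\cos t}(f,x)$ in terms of $\Dx f$; below, $C$ is a positive constant, not the same at each occurrence, independent of $f$ and $n$. First I would fix a natural number $q$ large enough for the kernel estimate below (the required lower bound on $q$ being absolute, independent of $p$ and $\alpha$), and for a natural number $m$ put
\[
	\gamma_m=\int_0^\pi\krn t\sin^5t\,dt,
	\qquad
	J_m(f,x)=\frac1{\gamma_m}\int_0^\pi T_{2;\cos t}(f,x)\krn t\sin^5t\,dt .
\]
By Lemma~\ref{lm:S-Q}, $J_m(f,\cdot)$ is an algebraic polynomial of degree at most $(q+2)(m-1)$. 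Since $T_{2;y}$ is, up to normalisation, the Jacobi translation with parameters $(2,2)$, it reproduces constants and obeys the symmetric analogues of Lemma~\ref{lm:properties-tau}, in particular $a_n(T_{2;\cos t}(f,\cdot))=a_n(f)\Px n(\cos t)$; hence $J_m(1,x)=1$, and therefore
\[
	f(x)-J_m(f,x)=\frac1{\gamma_m}\int_0^\pi\bigl(f(x)-T_{2;\cos t}(f,x)\bigr)\krn t\sin^5t\,dt .
\]

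Next I would invoke the analogues of Lemma~\ref{lm:Dtau} and Lemma~\ref{lm:tauuDx} for $T_{2;y}$, proved by the same Fourier--Jacobi-coefficient argument together with $\Dx\Px n=-n(n+5)\Px n$. Because $a_0(\Dx f)=\int_{-1}^1\Dx f(x)\Si x^2\,dx=0$, the resulting representation holds on all of $(-1,1)$ and reads
\[
	f(x)-T_{2;\cos t}(f,x)=-\int_0^t\frac1{\sin^5r}\int_0^r\sin^5 s\,T_{2;\cos s}(\Dx f,x)\,ds\,dr ,
\]
the vanishing of the zeroth moment being exactly what makes the inner integral $O\bigl((\pi-r)^{6}\bigr)$ as $r\to\pi-0$, so that all iterated integrals are absolutely convergent. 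Substituting this into the error formula and interchanging the order of integration yields $f(x)-J_m(f,x)=-\int_0^\pi T_{2;\cos s}(\Dx f,x)\,\Phi_m(s)\,ds$ with the nonnegative kernel
\[
	\Phi_m(s)=\frac{\sin^5 s}{\gamma_m}\int_s^\pi\frac1{\sin^5 r}\int_r^\pi\krn t\sin^5t\,dt\,dr .
\]

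Then I would pass to the $\norm\cdot$ norm, apply Minkowski's integral inequality, and use the generalised form of Lemma~\ref{lm:inequality} — valid precisely on the $p,\alpha$-range assumed here — to bound $\norm{T_{2;\cos s}(\Dx f,\cdot)}\le C\norm{\Dx f}$ uniformly in $s$, which reduces everything to the scalar estimate $\int_0^\pi\Phi_m(s)\,ds\le C m^{-2}$. By Fubini this integral equals $\gamma_m^{-1}\int_0^\pi\krn t\sin^5t\,\bigl(\int_0^t\sin^{-5}r\int_0^r\sin^5 s\,ds\,dr\bigr)\,dt$; using $\int_0^t\sin^{-5}r\int_0^r\sin^5 s\,ds\,dr\le C t^2$ for $t\le\pi/2$ and $\le C(\pi-t)^{-4}$ for $\pi/2<t<\pi$, together with $\gamma_m\asymp m^{2q-2}$ and the moment asymptotics $\int_0^\pi\krn t\,t^{j}\,dt\asymp m^{2q+3-j}$ (convergent for the values of $j$ that occur once $q$ is large), both ranges of $t$ contribute $O(m^{-2})$. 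Hence $\norm{f-J_m(f,\cdot)}\le C m^{-2}\norm{\Dx f}$; choosing $m$ with $(q+2)(m-1)\le n-1<(q+2)m$ — so that $J_m(f,\cdot)\in\mathbb P_n$ and $m\asymp n$ — we obtain $\E\le\norm{f-J_m(f,\cdot)}\le C n^{-2}\norm{\Dx f}$, the finitely many small values of $n$ being handled trivially by $\E\le\norm{f-a_0(f)\Px0}$.

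The main obstacle I anticipate is twofold. First, the uniform-in-$s$ boundedness of $f\mapsto T_{2;\cos s}(\Dx f,\cdot)$ on $\Lp$ over the present, wider ranges of $p$ and $\alpha$: this is why the restrictions in the lemma differ from those in Lemmas~\ref{lm:inequality} and~\ref{lm:bound-tau}, and it is precisely what the cited generalised form of Lemma~\ref{lm:inequality} supplies. Second, the Jackson-kernel estimate $\int_0^\pi\Phi_m(s)\,ds\le Cm^{-2}$, in which the neighbourhood of $t=\pi$ — where $\int_0^t\sin^{-5}r\int_0^r\sin^5 s\,ds\,dr$ blows up like $(\pi-t)^{-4}$ — must still be shown to contribute only $O(m^{-2})$; both of these force $q$ to be taken sufficiently large. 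Setting up the $T_{2;y}$-analogues of Lemmas~\ref{lm:Dtau} and~\ref{lm:tauuDx} needed in the second step is routine given the $(2,2)$-Jacobi product formula, but it has to be carried out.
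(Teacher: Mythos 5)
Your proposal is correct and takes essentially the same route as the paper: the same Jackson-type polynomial built from $T_{2;\cos t}$ via Lemma~\ref{lm:S-Q}, the same reduction through the symmetric analogues of Lemmas~\ref{lm:tauuDx} and~\ref{lm:bound-tau} (resting, as in the paper, on the generalised form of Lemma~\ref{lm:inequality} for the wider $(p,\alpha)$-range), and the same Jackson-kernel moment estimates yielding the factor $m^{-2}\asymp n^{-2}$. The only difference is organisational: the paper first establishes the pointwise bound $\norm{T_{2;\cos t}(f,x)-f(x)}\le Ct^{2}\norm{\Dx f(x)}$ by arguing as for inequality~\eqref{eq:Dl-Dx} and then integrates against the kernel, whereas you interchange the integrations first and absorb the blow-up near $t=\pi$ into the $\sin^{5}t$ factor, which is a harmless rearrangement of the same estimate.
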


\begin{proof}
	For a fixed natural number $q>2$,
	we chose the natural number~$m$
	such that
	\begin{displaymath}
		\frac{n-1}{q+2}<m\le\frac{n-1}{q+2}+1.
	\end{displaymath}
	
	It is easy to prove that under the conditions of the lemma,
	$f\in\Lp$ implies $f\in\Lmu$.
	Hence by Lemma~\ref{lm:S-Q},
	it follows that the function
	\begin{displaymath}
		Q(x)=\frac1{\gamma_m}
		  \int_0^\pi T_{2;\cos t}(f,x)\krn t\sin^5t\,dt,
	\end{displaymath}
	where
	\begin{displaymath}
		\gamma_m=\int_0^\pi\krn t\sin^5t\,dt,
	\end{displaymath}
	is an algebraic polynomial of degree not greater than $n-1$.
	Therefore,
	by applying the generalised Minkowski inequality,
	we have
	\begin{multline*}
		\E\le\norm{f-Q}\\
		\le\frac1{\gamma_m}\int_0^\pi\norm{T_{2;\cos t}(f,x)-f(x)}
			\krn t\sin^5t\,dt.
	\end{multline*}
	
	Reasoning as in the proof of inequality~\eqref{eq:partB}
	of Theorem~\ref{th:w-K}, i.e.,
	applying the appropriately modified versions
	of Lemmas~\ref{lm:tauuDx} and~\ref{lm:bound-tau}
	for the operator $T_{2;\cos t}(f,x)$,
	we obtain
	\begin{displaymath}
		\E\le\Cn\norm{\Dx f(x)}\frac1{\gamma_m}
		  \int_0^\pi t^2\krn t\sin^5t\,dt.
	\end{displaymath}
	Making use of an estimate of Jackson kernel,
	we get
	\begin{displaymath}
		\E\le\Cn\frac1{m^2}\norm{\Dx f(x)}
		\le\Cn\frac1{n^2}\norm{\Dx f(x)}.
	\end{displaymath}
	
	Lema~\ref{lm:E-D} is proved.
\end{proof}

\subsection{}

\begin{thm}\label{th:w-K}
	Let the numbers~$p$ and~$\alpha$ be such that $\allp$;
	\begin{alignat*}2
		1/2 			   &<\alpha\le1
		  &\quad &\text{for $p=1$},\\
		1-\frac1{2p} &<\alpha<\frac32-\frac1{2p}
		  &\quad &\text{for $1<p<\infty$},\\
		1 				   &\le\alpha<3/2
		  &\quad &\text{for $p=\infty$}.
	\end{alignat*}
	If $f\in\Lp$,
	then for all $\delta\in[0,\pi)$,
	\begin{displaymath}
		\Cn\K\le\w\le\Cn\frac1{\Co\delta}\K,
	\end{displaymath}
	where the positive constants~$\prevC$ and~$\lastC$
	do not depend on~$f$ and~$\delta$.
\end{thm}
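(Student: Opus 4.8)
The plan is to establish the two inequalities separately; when $\delta=0$ both sides vanish, so I may assume $\delta\in(0,\pi)$.

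\emph{Right-hand inequality.} I would fix $|t|\le\delta$ and, for an arbitrary $g\in\AD$, start from
\begin{displaymath}
	\hatT t{f,x}-f(x)=\prn{\hatT t{f-g,x}-(f-g)(x)}+\prn{\hatT t{g,x}-g(x)}.
\end{displaymath}
For the first summand the triangle inequality and Lemma~\ref{lm:bound-tau} give $\norm{\hatT t{f-g,x}-(f-g)(x)}\le\frac C{\Co t}\norm{f-g}+\norm{f-g}\le\frac C{\Co\delta}\norm{f-g}$. For the second summand, since $g\in\AD$, the Corollary to Lemma~\ref{lm:tauuDx} gives $\hatT t{g,x}-g(x)=\int_0^t\sincosv\int_0^v\hatT u{\Dx g,x}\sincosu\,du\,dv$; applying the generalised Minkowski inequality and then Lemma~\ref{lm:bound-tau} to the factor $\hatT u{\Dx g,x}$, and carrying out the elementary estimates of the resulting weight integrals, one gets $\norm{\hatT t{g,x}-g(x)}\le\frac{C\delta^2}{\Co\delta}\norm{\Dx g}$. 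Adding the two bounds and taking $\inf_{g\in\AD}$ yields $\w\le\frac C{\Co\delta}\K$.

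\emph{Left-hand inequality.} It is enough to produce, for each $\delta\in(0,\pi)$, a function $g_\delta\in\AD$ with $\norm{f-g_\delta}+\delta^2\norm{\Dx g_\delta}\le C\,\w$. For $g_\delta$ I would take a Steklov-type average of the translates $\hatT u{f,x}$, $0\le u\le\delta$, of the form
\begin{displaymath}
	\gd=\frac1{\kap}\int_0^\delta\prn{\int_0^t\lambda_\delta(t,u)\hatT u{f,x}\,du}\,dt,
\end{displaymath}
where the positive weight $\lambda_\delta$ and the normalising constant~$\kap$ are chosen, guided by the kernel of the identity in Lemma~\ref{lm:tauuDx} (equivalently, of the Corollary), so that: (i) $g_\delta$ is a weighted mean value, $\frac1{\kap}\int_0^\delta\int_0^t\lambda_\delta(t,u)\,du\,dt=1$; and (ii) after differentiating under the integral sign --- legitimate by Lemma~\ref{lm:Dtau} --- and transferring $\Dx$ onto the parameter via the differential equation of Lemma~\ref{lm:tauuDx}, one has
\begin{displaymath}
	\Dx\gd=\frac1{\kap}\int_0^\delta\psi_\delta(t)\prn{\hatT t{f,x}-f(x)}\,dt
\end{displaymath}
with $\int_0^\delta\psi_\delta(t)\,dt=0$ and $\frac1{\kap}\int_0^\delta|\psi_\delta(t)|\,dt\le\frac C{\delta^2}$. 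Then (i) gives, since $u\le\delta$, $\norm{f-g_\delta}\le\frac1{\kap}\int_0^\delta\int_0^t\lambda_\delta(t,u)\norm{\hatT u{f,x}-f(x)}\,du\,dt\le\w$, and (ii) gives $\delta^2\norm{\Dx g_\delta}\le\frac{\delta^2}{\kap}\int_0^\delta|\psi_\delta(t)|\,dt\cdot\w\le C\,\w$; hence $\K\le\norm{f-g_\delta}+\delta^2\norm{\Dx g_\delta}\le C\,\w$, which is the required inequality $C\K\le\w$.

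The weight-integral estimates in the right-hand inequality are elementary. The main obstacle will be the left-hand inequality: one must pin down $\lambda_\delta$ and $\kap$ so that simultaneously $\kap$ is of order~$\delta^2$ (up to $\cos(\delta/2)$-factors), the two integrations by parts leave no surviving boundary terms at $t=0$ and $t=\delta$, and $\Dx g_\delta$ takes the mean-zero form displayed above; and one must verify that $g_\delta$ genuinely belongs to~$\AD$, i.e.\ that $g_\delta'$ is absolutely continuous on every $[a,b]\subset(-1,1)$ and $\Dx g_\delta\in\Lp$, which reduces to justifying the differentiation under the integral sign and the applicability of Lemmas~\ref{lm:Dtau} and~\ref{lm:tauuDx} for a function $f$ that is merely in~$\Lp$ --- presumably by first settling the case of smooth~$f$ and then passing to the limit.
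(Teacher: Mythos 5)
Your outline for the right-hand inequality follows the paper's decomposition, but the key estimate you assert there does not hold by the route you describe: starting from the single identity
\begin{displaymath}
	\hatT t{g,x}-g(x)=\int_0^t\sincosv\int_0^v\hatT u{\Dx g,x}\sincosu\,du\,dv
\end{displaymath}
and applying Minkowski plus Lemma~\ref{lm:bound-tau}, the weight integral you must bound is
$\int_0^t\sincosv\int_0^v\varsincosu\,du\,dv$. The inner integral equals $\frac13\prn{1-\prn{\cos\frac v2}^6}$, which stays bounded away from zero as $v\to\pi$, so the outer integrand behaves like $\prn{\cos\frac v2}^{-9}$ and the whole integral like $\prn{\cos\frac t2}^{-8}=1/(\Co t)^2$ --- not like $t^2/\Co t$. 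So for $\delta$ near $\pi$ you only get the factor $1/(\Co\delta)^2$, which is strictly weaker than the theorem's $1/\Co\delta$. This is exactly why the paper proves the intermediate inequality $\norm{\hatT t{g,x}-g(x)}\le C\,t^2(\Co t)^{-1}\norm{\Dx g(x)}$ in two cases: for $\pi/2\le t<\pi$ it uses the \emph{second} identity of the Corollary (the difference $\hatT t{g,x}-\hatT{\pi/2}{g,x}$, whose inner integral $\int_v^\pi$ supplies the compensating factor $\prn{\cos\frac v2}^6$, giving the bound $C/\Co t$), and then adds the already settled case $t\le\pi/2$ by the triangle inequality.

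The more serious gap is the left-hand inequality, which you yourself flag as the ``main obstacle'' and leave unresolved: you postulate a weight $\lambda_\delta$ and a mean-zero kernel $\psi_\delta$ with the right size, but producing them is the entire content of the proof. The paper's construction is different and more rigid: take exactly the averages $\gd=\frac1\kap\int_0^\delta\sincosv\int_0^v\hatT u{f,x}\sincosu\,du\,dv$ with $\kap$ the same double integral of the weights (so $\kap\asymp\delta^2$ for $\delta\le\pi/2$ and $\norm{f-\gd}\le\w$ is immediate), and then, instead of integrating by parts in the parameter, introduce the auxiliary function $g(x)=-\int_0^x\Si y^{-3}\int_y^1\Si z^2\fz dz\,dy$ with $\Dx g(x)=f(x)-c_1/c_0$. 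The Corollary of Lemma~\ref{lm:tauuDx} then collapses the double average to $\gd=\frac1\kap\prn{\hatT\delta{g,x}-g(x)}+c_1/c_0$, and applying $\Dx$ together with Lemma~\ref{lm:Dtau} yields the exact identity $\Dx\gd=\frac1\kap\prn{\hatT\delta{f,x}-f(x)}$ --- a single difference at $t=\delta$, not an averaged mean-zero expression as you posit --- from which $\gd\in\AD$ and $\delta^2\norm{\Dx\gd}\le C\w$ follow at once, with no boundary-term bookkeeping and no smoothing-and-limit argument for general $f\in\Lp$. You would also still need the paper's reduction of the range $\pi/2\le\delta<\pi$ to $\delta=1$ (using monotonicity of $\w$ in $\delta$), since $\kap\asymp\delta^2$ is only valid for $\delta\le\pi/2$. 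As it stands, your proposal does not contain a proof of $C\K\le\w$.
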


\begin{proof}
	We prove that for every function $g(x)\in\AD$
	and every $t\in(-\pi,\pi)$,
	we have
	\begin{equation}\label{eq:Dl-Dx}
		\norm{\hatT t{g,x}-g(x)}
		\le\Cn\frac1{\Co t}t^2\norm{\Dx g(x)},
	\end{equation}
	where the constant~$\lastC$ does not depend on~$g$ dhe~$t$.
	
	Let $0<t\le\pi/2$.
	Then Corollary of Lemma~\ref{lm:tauuDx} yields
	\begin{multline*}
		I_1=\norm{\hatT t{g,x}-g(x)}\\
		=\norm{\int_0^t\sincosv\int_0^v\hatT u{\Dx g,x}\sincosu\,du\,dv}.
	\end{multline*}
	Applying the generalised Minkowski inequality
	and Lemma~\ref{lm:bound-tau},
	we get
	\begin{multline*}
		I_1\le\int_0^t\sincosv\\
			\times\int_0^v\norm{\hatT u{\Dx g,x}}\sincosu\,du\,dv\\
		\le\Cn\norm{\Dx g(x)}\int_0^t\sincosv\int_0^v\varsincosu\,du\,dv.
	\end{multline*}
	Since the inequality
	\begin{displaymath}
		\int_0^t\sincosv\int_0^v\varsincosu\,du\,dv\le\Cn t^2
	\end{displaymath}
	holds for $0<t\le\pi/2$,
	we obtain
	\begin{displaymath}
		I_1\le\Cn t^2\norm{\Dx g(x)}
		\le\lastC\frac1{\Co t}t^2\norm{\Dx g(x)}.
	\end{displaymath}
	
	For $t=0$ inequality~\eqref{eq:Dl-Dx} is trivial.
	
	Let $\pi/2\le t<\pi$.
	Then by Corollary of Lemma~\ref{lm:tauuDx},
	we get
	\begin{multline*}
		I_2=\norm{\hatT t{g,x}-\hatT{\pi/2}{g,x}}\\
		=\norm{\int_{\pi/2}^t\sincosv
		  \int_v^\pi\hatT u{\Dx g,x}\sincosu\,du\,dv}.
	\end{multline*}
	Applying the generalised Minkowski inequality
	and then Lemma~\ref{lm:bound-tau},
	we have
	\begin{multline*}
		I_2\le\Cn\norm{\Dx g(x)}\\
			\times\int_{\pi/2}^t\sincosv\int_v^\pi\varsincosu\,du\,dv.
	\end{multline*}
	Considering that for $\pi/2\le t<\pi$ we have
	\begin{displaymath}
		\int_{\pi/2}^t\sincosv\int_v^\pi\varsincosu\,du\,dv
		\le\Cn\frac1{\Co t},
	\end{displaymath}
	it follows that
	\begin{equation}\label{eq:I2-Dx}
		I_2\le\Cn\frac1{\Co t}\norm{\Dx g(x)}
		\le\lastC\frac1{\Co t}t^2\norm{\Dx g(x)}.
	\end{equation}
	Since
	\begin{displaymath}
		\norm{\hatT t{g,x}-g(x)}
		\le\norm{\hatT t{g,x}-\hatT{\pi/2}{g,x}}
			+\norm{\hatT{\pi/2}{g,x}-g(x)},
	\end{displaymath}
	applying inequality~\eqref{eq:I2-Dx}
	and the fact that inequality~\eqref{eq:Dl-Dx}
	has been proved for $0\le t\le\pi/2$,
	we obtain
	\begin{displaymath}
		\norm{\hatT t{g,x}-g(x)}\le\Cn\frac1{\Co t}t^2\norm{\Dx g(x)}
	\end{displaymath}
	for $\pi/2\le t<\pi$.
	
	Thus,
	inequality~\eqref{eq:Dl-Dx} is proved for $0\le t<\pi$.
	Since
	\begin{displaymath}
		\T{\cos t}{g,x}=\T{\cos{(-t)}}{g,x},
	\end{displaymath}
	we conclude that inequality~\eqref{eq:Dl-Dx}
	holds for every $t\in(-\pi,\pi)$.
	
	Let $f\in\Lp$ and $0\le|t|\le\delta<\pi$.
	Then for every function $g(x)\in\AD$,
	applying Lemma~\ref{lm:bound-tau} gives
	\begin{multline*}
		\norm{\hatT t{f,x}-f(x)}
		  \le\norm{\hatT t{f-g,x}}+\norm{\hatT t{g,x}-g(x)}+\norm{g-f}\\
		\le\Cn\frac1{\Co t}\norm{f-g}+\norm{\hatT t{g,x}-g(x)}.
	\end{multline*}
	Making use of inequality~\eqref{eq:Dl-Dx},
	we get
	\begin{displaymath}
		\norm{\hatT t{f,x}-f(x)}
		\le\Cn\frac1{\Co t}\prn{\norm{f-g}+t^2\norm{\Dx g(x)}},
	\end{displaymath}
	where the constant~$\lastC$ does not depend on~$f$, $g$ and~$t$.
	This proves the right-hand side inequality of the theorem.
	
	In order to prove the left-hand side inequality,
	we consider the function
	\begin{displaymath}
		\gd=\frac1\kap
		  \int_0^\delta\sincosv\int_0^v\hatT u{f,x}\sincosu\,du\,dv,
	\end{displaymath}
	where
	\begin{displaymath}
		\kap=\int_0^\delta\sincosv\int_0^v\sincosu\,du\,dv.
	\end{displaymath}
	
	Let $0<\delta<\pi/2$,
	then
	\begin{equation}\label{eq:kap-delta}
		\Cn\delta^2\le\kap\le\Cn\delta^2.
	\end{equation}
	
	Applying the generalised Minkowski inequality
	and Lemma~\ref{lm:bound-tau},
	we obtain
	\begin{multline*}
		\norm\gd\\
		\le\frac1{\kap}\int_0^\delta\sincosv
				\int_0^v\norm{\hatT u{f,x}}\sincosu\,du\,dv\\
		\le\Cn\frac1{\Co\delta}\norm f,
	\end{multline*}
	that is, $\gd\in\Lp$.
	
	Put
	\begin{displaymath}
		g(x)=-\int_0^x\Si{y}^{-3}\int_y^1\Si{z}^2\fz\,dz\,dy,
	\end{displaymath}
	where
	\begin{displaymath}
		c_1=\int_{-1}^1\Si{z}^2f(z)\,dz,
		\quad c_0=\int_{-1}^1\Si{z}^2\,dz.
	\end{displaymath}
	Since
	\begin{displaymath}
		\Dx g(x)=f(x)-\frac{c_1}{c_0},
	\end{displaymath}
	we have
	\begin{multline*}
		\gd=\frac1\kap\int_0^\delta\sincosv\\
			\times\int_0^v\hatT u{\Dx g,x}\sincosu\,du\,dv+\frac{c_1}{c_0}.
	\end{multline*}
		Applying Corollary of Lemma~\ref{lm:tauuDx} gives
	\begin{displaymath}
		\gd=\frac1\kap(\hatT\delta{g,x}-g(x))+\frac{c_1}{c_0}.
	\end{displaymath}
	Applying the operator~$\Dx$
	and then Lemma~\ref{lm:Dtau},
	it follows that
	\begin{displaymath}
		\Dx\gd=\frac1\kap(\hatT\delta{\Dx g,x}-\Dx g(x))
		=\frac1\kap(\hatT\delta{f,x}-f(x)).
	\end{displaymath}
	Therefore,
	by Lemmas~\ref{lm:bound-tau} and~\ref{lm:Dtau},
	we conclude that $\gd\in\AD$.
	
	By the last equality and inequality~\eqref{eq:kap-delta},
	we obtain
	\begin{displaymath}
		\norm{\Dx\gd}\le\Cn\frac1{\delta^2}\norm{\hatT\delta{f,x}-f(x)},
	\end{displaymath}
	that is,
	\begin{displaymath}
		\norm{\Dx\gd}\le\lastC\frac1{\delta^2}\w.
	\end{displaymath}
	
	On the other hand, by applying the Minkowsky inequality,
	we get
	\begin{multline*}
		\norm{f(x)-\gd}\le\frac1\kap\int_0^\delta\sincosv\\
			\times\int_0^v\norm{f(x)-\hatT u{f,x}}\sincosu\,du\,dv\le\w.
	\end{multline*}
	Thus, for $0<\delta\le\pi/2$ we have proved that
	\begin{displaymath}
		I(\delta)=\norm{f(x)-\gd}+\delta^2\norm{\Dx\gd}\le\Cn\w.
	\end{displaymath}
	
	Since for $\pi/2\le\delta<\pi$ we have
	$\delta^2<\pi^2\cdot1$ and $1<\pi/2$,
	it follows that
	\begin{multline*}
		\K\le\pi^2\prn{\norm{f(x)-g_1(x)}+1^2\cdot\norm{\Dx g_1(x)}}\\
		=\pi^2I(1)\le\pi^2\lastC\wpar{f,1}\le\Cn\w.
	\end{multline*}
	Thus,
	we have proved the left-hand side inequality of the theorem
	for $0<\delta<\pi$.
	For $\delta=0$ this inequality is trivial.
	
	Theorem~\ref{th:w-K} is proved.
\end{proof}

\begin{thm}\label{th:jackson}
	Let the numbers~$p$ and~$\alpha$ be such that $\allp$;
	\begin{alignat*}2
		1/2 			   &<\alpha\le1
		  &\quad &\text{for $p=1$},\\
		1-\frac1{2p} &<\alpha<\frac32-\frac1{2p}
		  &\quad &\text{for $1<p<\infty$},\\
		1 				   &\le\alpha<3/2
		  &\quad &\text{for $p=\infty$}.
	\end{alignat*}
	If $f\in\Lp$,
	then for every natural number~$n$
	\begin{displaymath}
		\Cn\E\le\wpar{f,1/n}
		\le\Cn\frac1{n^2}\sum_{\nu=1}^n\nu\Epar\nu f,
	\end{displaymath}
	where the positive constants~$\prevC$ and~$\lastC$
	do not depend on~$f$ and~$n$.
\end{thm}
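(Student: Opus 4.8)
The plan is to reduce the assertion to Theorem~\ref{th:w-K}, which equates the modulus with the $K$-functional, and then to estimate the $K$-functional from both sides. Setting $\delta=1/n$, we have $\delta\in(0,1]\subset[0,\pi)$ and $\cos^4(\delta/2)\ge\cos^4(1/2)>0$, so Theorem~\ref{th:w-K} gives $\wpar{f,1/n}$ between two positive multiples of $\Kpar{f,1/n}$, and it suffices to prove
\begin{displaymath}
	C_1\E\le\Kpar{f,1/n}\le\frac{C_2}{n^2}\sum_{\nu=1}^n\nu\Epar\nu f .
\end{displaymath}
The left inequality is immediate: for any $g\in\AD$, subadditivity of the best approximation gives $\E\le\norm{f-g}+\Epar ng$, while Lemma~\ref{lm:E-D}, whose restrictions on~$\alpha$ are weaker than those imposed here, gives $\Epar ng\le Cn^{-2}\norm{\Dx g}$; adding and passing to the infimum over $g\in\AD$ yields $\E\le C\,\Kpar{f,1/n}$.

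For the right inequality I would first prove a Bernstein-type inequality for the operator~$\Dx$: if $P$ is an algebraic polynomial of degree at most~$k$, then $\norm{\Dx P}\le Ck^2\norm P$. Writing $\Dx P=\Si{x}P''-6xP'$, two applications of the first inequality of Lemma~\ref{lm:bernshtein-markov} give $\|P''\|_{p,\alpha+1}\le Ck\,\|P'\|_{p,\alpha+1/2}\le Ck^2\norm P$, i.e.\ $\norm{\Si{x}P''}\le Ck^2\norm P$, and one application of its second inequality with $\rho=\tfrac12$ gives $\norm{P'}\le Ck\,\|P'\|_{p,\alpha+1/2}\le Ck^2\norm P$, whence $\norm{6xP'}\le6\norm{P'}\le Ck^2\norm P$; the auxiliary weights $\alpha+\tfrac12$ and $\alpha+1$ occurring here still satisfy the conditions of Lemma~\ref{lm:bernshtein-markov}.

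Granting this, choose~$m$ with $2^{m-1}<n\le2^m$, and for $j=0,\dots,m$ let $T_j\in\mathbb P_{2^j}$ be a polynomial of best approximation of~$f$, so that $\norm{f-T_j}=\Epar{2^j}f$, $T_0$ is constant and $\Dx T_0=0$. Taking $g=T_m\in\AD$ in the $K$-functional, we have
\begin{displaymath}
	\norm{f-T_m}=\Epar{2^m}f\le\E\le\frac{2}{n^2}\sum_{\nu=1}^n\nu\Epar\nu f ,
\end{displaymath}
and, by the Bernstein-type inequality together with the elementary properties of best approximation,
\begin{displaymath}
	\norm{\Dx T_m}\le\sum_{j=1}^m\norm{\Dx\prn{T_j-T_{j-1}}}
		\le C\sum_{j=1}^m2^{2j}\norm{T_j-T_{j-1}}
		\le C\sum_{j=1}^m2^{2j}\Epar{2^{j-1}}f .
\end{displaymath}
Comparing this dyadic sum with $\sum_{\nu=1}^{2^{m-1}}\nu\Epar\nu f\le\sum_{\nu=1}^n\nu\Epar\nu f$ — the block $2^{j-2}<\nu\le2^{j-1}$ contributing, up to a constant factor, the term $2^{2j}\Epar{2^{j-1}}f$ for $j\ge2$, the term $j=1$ being handled separately — we obtain $n^{-2}\norm{\Dx T_m}\le Cn^{-2}\sum_{\nu=1}^n\nu\Epar\nu f$, and adding the two estimates bounds $\Kpar{f,1/n}$ as required.

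The step I expect to be the main obstacle is the Bernstein-type inequality for~$\Dx$, and in particular the verification that the shifted Jacobi weights $\alpha+\tfrac12$ and $\alpha+1$ remain admissible in Lemma~\ref{lm:bernshtein-markov} throughout the stated range of~$p$ and~$\alpha$; the reduction via the $K$-functional and the dyadic bookkeeping are routine.
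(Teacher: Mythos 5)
Your proposal is correct and follows essentially the same route as the paper: the direct part via Lemma~\ref{lm:E-D} and Theorem~\ref{th:w-K}, and the converse part via Theorem~\ref{th:w-K}, a dyadic telescoping of best-approximation polynomials, and the Bernstein--Markov estimate $\norm{\Dx P}\le Ck^2\norm P$ obtained from Lemma~\ref{lm:bernshtein-markov} exactly as you describe (the shifted weights $\alpha+\tfrac12$, $\alpha+1$ are indeed admissible, since the theorem's hypotheses force $\alpha>0$). The step you worried about is precisely the paper's own argument, so there is no gap.
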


\begin{proof}
	For every function $g(x)\in\AD$,
	we have
	\begin{displaymath}
		\E\le\Epar n{f-g}+\Epar n g.
	\end{displaymath}
	Applying Lemma~\ref{lm:E-D} gives
	\begin{displaymath}
		\E\le\norm{f-g}+\Cn\frac1{n^2}\norm{\Dx g(x)},
	\end{displaymath}
	where the constant~$\lastC$ does not depend on~$f$, $g$ and~$n$.
	Therefore, we get
	\begin{displaymath}
		\E\le\Cn\Kpar{f,1/n}.
	\end{displaymath}
	Hence Theorem~\ref{th:w-K} yields
	\begin{displaymath}
		\E\le\Cn\wpar{f,1/n},
	\end{displaymath}
	which proves the left-hand side inequality of the theorem.
	
	We prove the right-hand side inequality.
	Let $P_n(x)$ be the algebraic polynomial of best approximation for~$f$
	in the metrics~$\Lp$
	whose degree is not greater than~$n-1$.
	Let~$k$ be chosen such that
	\begin{equation}\label{eq:k}
		n/2<2^k\le n+1.
	\end{equation}
	Since $P_{2^k}(x)\in\AD$,
	Theorem~\ref{th:w-K} yields
	\begin{multline*}
		\wpar{f,1/n}\le\Cn\frac1{\prn{\cos\frac1{2n}}^4}\Kpar{f,1/n}\\
		\le\Cn\prn{\norm{f-P_{2^k}}+\frac1{n^2}\norm{\Dx P_{2^k}}}.
	\end{multline*}
	Since
	\begin{displaymath}
		\Dx P_{2^k}(x)
		=\sum_{\nu=0}^{k-1}\Dx\prn{P_{2^{\nu+1}}(x)-P_{2^\nu}(x)},
	\end{displaymath}
	Lemma~\ref{lm:bernshtein-markov} yields
	\begin{multline*}
		\norm{\Dx P_n(x)}\le\norm{\Si xP''_n(x)}+6\norm{P'_n(x)}\\
		\le\Cn n\|P'_n(x)\|_{p,\alpha+1/2}\le\Cn n^2\norm{P_n},
	\end{multline*}
	whence we obtain
	\begin{multline*}
		\norm{\Dx P_{2^k}(x)}
		\le\Cn\sum_{\nu=0}^{k-1}2^{2(\nu+1)}
			\norm{P_{2^{\nu+1}}(x)-P_{2^\nu}(x)}\\
		\le\lastC\sum_{\nu=0}^{k-1}2^{2(\nu+1)}
			(\norm{P_{2^{\nu+1}}(x)-f(x)}+\norm{f(x)-P_{2^\nu}(x)})\\
		\le\lastC\sum_{\nu=0}^{k-1}2^{2(\nu+1)}
			(\Epar{2^{\nu+1}}f+\Epar{2^\nu}f)
		\le2\lastC\sum_{\nu=0}^{k-1}2^{2(\nu+1)}\Epar{2^\nu}f.
	\end{multline*}
	Therefore, inequality~\eqref{eq:k} implies
	\begin{multline*}
		\wpar{f,1/n}
		\le\Cn\bigg(
			\Epar{2^k}f
			+\frac1{n^2}\sum_{\nu=0}^{k-1}2^{2(\nu+1)}\Epar{2^\nu}f
		\bigg)\\
		\le\lastC\frac1{n^2}\sum_{\nu=0}^k 2^{2(\nu+1)}\Epar{2^\nu}f.
	\end{multline*}
	
	We note that
	\begin{displaymath}
		\sum_{\mu=2^{\nu-1}}^{2^\nu-1}\mu\Epar\mu f
		\ge2^{2(\nu-1)}\Epar{2^\nu}f
	\end{displaymath}
	holds for $\nu=1,2,\dots,k$.
	Hence we have
	\begin{multline*}
		\wpar{f,1/n}\le\Cn\frac1{n^2}\bigg(4\Epar1f+\sum_{\nu=1}^k
			\sum_{\mu=2^{\nu-1}}^{2^\nu-1}\mu\Epar\mu f\bigg)\\
		\le\Cn\frac1{n^2}\sum_{\nu=1}^{2^k-1}\nu\Epar\nu f
		\le\lastC\frac1{n^2}\sum_{\nu=1}^n\nu\Epar\nu f.
	\end{multline*}
	
	Theorem~\ref{th:jackson} is proved.
\end{proof}

\bibliographystyle{amsplain}
\bibliography{maths}

\end{document}